\newcommand{\bS}{\text{\emph{\textbf{\textrm{S}}}}}
\newcommand{\bI}{\text{\emph{\textbf{\textrm{I}}}}}
\newcommand{\bD}{\text{\emph{\textbf{\textrm{D}}}}}
\newcommand{\diffrate}{\delta}
\newcommand{\un}{\text{\emph{\textbf{1}}}}
\newcommand{\zero}{\text{\emph{\textbf{0}}}}
\newcommand{\bX}{\text{\emph{\textbf{X}}}}
\newcommand{\bx}{\text{\emph{\textbf{x}}}}
\newcommand{\bY}{\text{\emph{\textbf{Y}}}}
\newcommand{\bV}{\text{\emph{\textbf{V}}}}
\newcommand{\bz}{\text{\emph{\textbf{z}}}}
\newcommand{\bu}{\text{\emph{\textbf{u}}}}
\newcommand{\br}{\text{\emph{\textbf{r}}}}
\newcommand{\D}{\mathcal{D}}
\newcommand{\lb}{\llbracket}
\newcommand{\rb}{\rrbracket}
\newcommand{\lrb}{\lb1,P\rb}
\newcommand{\lrbN}{\lb1,N\rb}
\newcommand\dt{\frac{d}{dt}}
\newcommand\dtau{\frac{d}{d\tau}}
\newcommand\R{\mathbb{R}}
\newcommand\eps{\varepsilon}
\newtheorem{hyp}{Assumption}
\newtheorem{lemma}{Lemma}
\newtheorem{theo}{Theorem}
\newtheorem{prop}{Proposition}
\begin{document}
\begin{frontmatter}

\title{Derivation of a spatial replicator system with environmental heterogeneity from a co-colonization SIS model with $N$ strains and $P$ patches}
%under quasi-neutrality and slow migration}

\author[1,*]{Sten Madec}
\author[2]{Erida Gjini}

\address[2]{Center for Computational and Stochastic Mathematics, Instituto Superior Tecnico, University of Lisbon, Lisbon, Portugal}
\address[1]{Institut Denis Poisson, University of Tours, France}
\address[*]{Corresponding author: sten.madec@univ-tours.fr}
	
	%\maketitle
	\begin{abstract}
		The interplay between local and regional processes in the dynamics of ecological communities remains a challenge to model, analyze and predict. This is especially notable in infectious diseases with multiple strains, where several layers of heterogeneity can interact, including strain biological traits and environmental heterogeneity among locations where disease can spread.
        Motivated by this challenge, here we study a \textit{Susceptible-Infected-Susceptible} (SIS) model with co-colonization and multiple interacting strains where hosts move between a set of inter-connected patches. Under strain similarity and slow migration rate, we derive a fast-slow approximation of the global metacommunity dynamics, resulting in a spatial replicator system for $N$ strains across $P$ patches. 
        %discrete model, similar to the continuous space model derived in \citep{le2023spatiotemporal}. The advantage of this approach lies in separating the neutral (\textit{fast}) and the non-neutral (\textit{slow}) part of the epidemiological dynamics in each patch. Under relatively low rates of movement (low diffusion), the fast dynamics regulate the total prevalence of susceptibles, single-infected and co-infected hosts in each patch ($S,I,D$). The slow dynamics drive temporal evolution of strain frequencies ($z$) in each patch. These local strain frequencies in each patch are governed by a replicator-like equation, where an additional contribution arises from migration, scaled explicitly by emergent epidemiological patch heterogeneity. In our model, the strains can vary within- and between-patches along several fitness dimensions, including transmission rates, clearance rates, priority effects and pairwise susceptibilities to coinfection; a complexity that totally defies straightforward prediction of their ecological outcome. 
        In contrast to a discretization approach on the spatial slow-fast PDE originally derived in\citep{le2023spatiotemporal}, here the slow-fast reduction is managed \textit{ab-initio} by a new approach using strongly the  Perron-Frobenius Theorem for Metzler matrices, which simplifies and clarifies the structure of the co-colonization system.
		%However, harnessing the analytical advantage of the fast-slow formulation (model-reduction),  we can investigate several key scenarios for the outcome of the coupled P-patch N-strain system, focusing on the global and local factors that promote or hinder coexistence, and on regimes that favour spatial segregation of strains, or ultimately competitive exclusion. The analytical tractability of this meta-population model, illustrated with several examples for $N=2$ strains and 2-3 patches, makes it a useful framework for application to general multi-strain coinfection systems with migration across heterogeneous environments. 
	\end{abstract}
	\begin{keyword}
		%% keywords here, in the form: keyword \sep keyword
	 metacommunity \sep coinfection \sep slow-fast dynamics \sep multi-strain \sep SIS model \sep ecological interactions \sep environmental heterogeneity
		%% PACS codes here, in the form: \PACS code \sep code
		
		%% MSC codes here, in the form: \MSC code \sep code
		%% or \MSC[2008] code \sep code (2000 is the default)
	\end{keyword}
	
	%%%Graphical abstract
	%\begin{graphicalabstract}
	%%\includegraphics{grabs}
	%\end{graphicalabstract}
	%
	%%%Research highlights
	%\begin{highlights}
	%\item Research highlight 1
	%\item Research highlight 2
	%\end{highlights}
	%
	
\end{frontmatter}
%\newpage
%\tableofcontents
%\newpage
%\linenumbers
	\section{Introduction}
		Here, we consider a multi-patch multi-strain SIS system with co-colonization/coinfection, strain interactions, and host migration between patches. This is an extension from the basic SIS (\textit{Suscetible-Infected-Susceptible}) coinfection model for $N$ interacting strains proposed by \cite{madec2020predicting}, generalized by \citep{le2023quasi} and extended to continuous space in \citep{le2023spatiotemporal}. 
	
	%The basic model follows SIS coinfection framework for $N$ co-circulating strains proposed by \cite{madec2020predicting}, where strains compete for susceptible hosts and interact via pairwise susceptibilities to coinfection. Using similarity arguments and applying timescale decomposition, it was found that in such a system, the susceptible, singly infected and coinfected host prevalences are fast variables reaching their equilibrium in a fast time-scale, whereas strain frequencies are the slow variables, obeying an explicit N-dimensional replicator equation. 
	In the non-spatial model versions \citep{madec2020predicting,le2023quasi,le2022disentangling}, it was shown that under strain similarity, strain frequencies follow a slow dynamics given by the replicator equation:
	\begin{equation}\label{le:slowreact}
		\frac{d}{d\tau} z_i = {\Theta z_i \cdot \bigg( \sum_{j\neq i} \lambda_i^j z_j -\mathop{\sum}_{1\leq k<j\leq N} (\lambda_j^k+\lambda_k^j) z_jz_k \bigg)},\quad i=1,\cdots,N
	\end{equation}
	where $\lambda_i^j$ denote pairwise invasion fitnesses between any two strains, and $\Theta$ gives the speed of the dynamics. In the spatial model extension \citep{le2023spatiotemporal}, allowing for diffusion of hosts in continuous space, a similar model reduction was obtained, and in the case of low-diffusion a replicator-like equation was again derived for strain frequencies over space $z_i(x,t)$: 
	
	%This model was generalized by \citep{le2023quasi,le2022disentangling} to allow for variation in 5 fitness dimensions between strains (see Table \ref{tab:allpars}). It was shown that the same replicator equation governs multi-strain frequencies even in this more complex case, only now the pairwise invasion fitness becomes a weighted average over all 5 trait dimensions. The next extension of this model was to include a spatial dimension to the dynamics and diffusion of hosts \citep{le2023spatiotemporal}. Under the assumption of random movement, uniform for all types of hosts, a fast-slow approximation was also obtained for the spatial model, and two diffusion regimes were studied (low diffusion and high diffusion respectively) relative to the $\epsilon$ parameter in strain similarity. 
	
	%For the case of relatively low-diffusion,  \cite{le2023spatiotemporal} arrived at the following equation for the slow-variables $z_i$ denoting strain frequencies over space and time $z_i(x,t)$: 
	\begin{equation}
		\frac{\partial z_i}{\partial \tau} = \Theta(x) z_i\bigg[(\Lambda(x)\textbf{z})_i - \textbf{z}^T\Lambda(x)\textbf{z}\bigg] + \Vec{\nu}(x) \cdot \nabla z_i + \Delta z_i
		\label{eq:replicatorspace}
	\end{equation}
	where the $z_i$ represent the frequencies of strain $i$ in each point in space, under the effect of local replicator dynamics, diffusion and advection arising from spatial heterogeneity in parameters. 
	
	In the present work, we consider the same epidemiological multi-strain model as in \citep{le2023quasi}, but over discrete space, where we assume hosts can move between a set of fully-connected heterogeneous patches. Leveraging a new analytical method, we show that under the assumptions of $\eps$-quasi neutrality and $\eps$-slow migration for the same $\eps\ll1$, the system is accurately described by a slow dynamics, corresponding to a discrete version of \eqref{le:slowreact}.
	
	This paper is organized as follows. Section 2 introduces the notations and presents the spatial co-infection SIS model with multiple strains, along with some basic properties. Section 3 provides a full analysis of the neutral dynamics in the absence of migration, summarized in Theorem~\ref{thNeutral}.
In Section 4, using these results, we apply Tikhonov’s slow-fast reduction method to derive the equation governing the slow evolution of strain frequencies. This section concludes with the main result of the paper, stated in Theorem~\ref{th:main} and the link with the reaction-advection-diffusion system \eqref{le:slowreact}.
Finally, the paper ends with three appendices, where technical definitions and supplementary details are provided.	\section{Model and assumptions}
	\subsection{Discrete space and notations}
	The space is modeled by $P$ patches. We denote $\lb1,P\rb=[1,P]\cap \mathbb{N}$.
	
	The column vectors $\bz=(\b\bz_p)_{p\in\lrb}$ of $\R^P$ are denoted in bold.
	In particular, $\un=(1,\cdots,1)^T$ and $\zero=(0,\cdots,0)$.
	
	For any vector  $\bX=(X_p)_{p\in \lrb}$ we denote $\bX>0$ if $X_p>0$ for each $p$. For 
	two vectors $\bX,\bY\in\R^P$, we say that $\bX>\bY$ if and only if $\bX-\bY>0$. We use similar notation for  $<,\leq$ and $\geq$.
	We note also the Hadamard product $\bX\bY=(X_pY_p)_{p\in\lrb}$.

	The connection between the patches is described by the $P\times P$ connectivity matrix $\D=(d_{kp})_{1\leq k,p \leq P}$.

	We assume the following.
	
	\begin{hyp}\label{MatrixD}The connectivity matrix $\D\in\R^{P\times P}$ satisfies the three following properties.
	\begin{itemize} 
		\item[(i)] $\D$ is a Metzler matrix. That is $d_{kp}\geq0$ for $k\neq p$. 
		\item[(ii)] 
	$\D$ is irreducible.
\item[(iii)]
	$\D\un=\zero.$ which reads for any $p\in\lrb$:
	$$d_{pp}=-\sum_{\substack{k=1 \\ k\neq p}}^P d_{pk}.$$
	
\end{itemize}
	\end{hyp}

	The point (i) is natural for a connectivity matrix. The point (ii)  is classical and insures that there is always a path from each patch to each other. The point (iii) implies that $\D$ is adapted to a conservation of the density in each patch\footnote{See Appendix C for the case of a matrix adapted to abundance in the case of the connection between tanks of different volume.}.
	Mathematically, these assumptions insure that $0$ is the principal eigenvalue of $\D$ and that all the other eigenvalues have a negative real part (\cite{Bullo2024}).
    
	\subsection{SIS with coinfection (SIDS) in space}
	We are interested in the following SIDS model with $N$ strains co-circulating and $P$ patches between which hosts can move. In any patch $p\in\lb1,p\rb $ and for strains $i,j$ we denote respectively $S_p$, $I_p^i$ and $D_p^{ij}$ the proportion of susceptible,  single infected by $i$ and co-infected by $i$ then $j$ in the patch $p$.
	 We note also
	$$J_p^i=I_p^i +\sum_{j} \mathbb{P}_p^{(i,j)\to (i)} D_p^{ij}+\mathbb{P}_p^{(j,i)\to (i)} D_p^{ji}$$
	where $\mathbb{P}_p^{(i,j)\to (s)}$ is the probability that a host infected by $i$ then $j$ transmit the strain $s$.  We assume here that there is no mutation: $$\mathbb{P}_p^{(i,j)\to (i)}+\mathbb{P}_p^{(i,j)\to(j)}=1.$$
	
	We denote also $\bS=(S_p)_{\in \lrb}$  the vector in $\R^p$.  Hence, by the assumption  \ref{MatrixD} - (iii), $(\D \bS)_p=\sum_{k=1}^P d_{pk} S_k=\sum_{k\neq p} d_{p_k}(S_k-S_p)$. The same notation holds for $\bI^i=(I^i_p)_{p\in\lrb}$ and $\bD^{ij}=(D^{ij}_p)_{p\in\lrb}$.
	The system reads:
	\begin{equation}\label{mainsys}
	\begin{cases}
		\dt S_p=r_p(1-S_p) +\sum\limits_{i=1}^N \gamma_p^i I_p^i +\sum\limits_{1\leq i,j\leq N} \gamma_p^{ij} D_p^{ij}-\sum\limits_{i=1}^N \beta_p^i J_p^i S_p+\diffrate (\D \bS)_p\\
		\dt I_p^i=\beta_p^i J_p^i S_p-(r_p+\gamma_p^i)I_p^i-\sum\limits_{j=1}^N k_p^{ij}\beta_p^i I_p^i J_p^j +\diffrate (\D\bI^i)_p,\; \forall i\in\lb1,N\rb\\
		\dt D_p^{ij}=k_p^{ij}\beta_p^i I_p^i J_p^j-(r_p+\gamma_p^{ij}) D_p^{ij} +\diffrate (\D\bD^{ij})_p,\; \forall (i,j)\in\lb1,N\rb^2
	\end{cases}
	\end{equation}
	 The parameter $\diffrate>0$ is the mean speed of migration and has the unit of  $t^{-1}$. In particular the coefficients $d_{kp}$ of the matrix $\D$ are dimensionless.
	See the table \ref{tabledef} for the definition of all the other parameters. 
	
	Let us state some simple properties of \eqref{mainsys}.
	\begin{prop}\label{prop:inv}[Invariant set]For each patch $p\in\lrb$, denote the total sum: 
		$$\Sigma_p=S_p+\sum\limits_{i=1}^N I^i_p+\sum\limits_{i=1}^N\sum\limits_{j=1}^N D_p^{ij}$$ and $\mathbf{\Sigma}=(\Sigma_p)_{p\in\lrb}$.
		Assume that assumption \ref{MatrixD}-(i) and (iii) holds true. Then the set $$\Omega=\left\{(\bS,\bI^1,\cdots, \bI^N,\bD^{11},\cdots,\bD^{1N},\bD^{21}\cdots,\bD^{NN})\in [0,1]^{P\times (1+N+N^2)},\; \text{s.t.} \mathbf{\Sigma}=\un\right\}$$
		is positively invariant under \eqref{mainsys}.
		
		If moreover the irreducibility assumption \ref{MatrixD}-(ii) holds true  and $\diffrate>0$, then any component   $\bx(t)$
        %\in\{\bS,\bI^1,\cdots, \bI^N,\bD^{11},\cdots,\bD^{1N},\bD^{21}\cdots,\bD^{NN}\}$
        of the solution $\bX(t)\in\Omega$, satisfies  either $\bx(t)=0$ or $\bx(t)>0$ for all $t>0$. In particular, the set $\Omega_0=\{(\bS,(\bI^i)_i,(\bD^{ij})_{ij})\in\Omega,\bS<1\}$ is positively invariant.
        
         Lastly, $E_0=\{(\un,\zero,\zero)\}$ is invariant and if $\bX(0)\in\Omega\setminus E_0$ then the solution belongs to $\Omega_0$ for any $t>0$.
	\end{prop}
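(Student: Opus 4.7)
The plan is to handle the three statements in order, using a pointwise conservation law for invariance of $\Omega$, a standard tangent-cone (Nagumo) argument for nonnegativity, and the Perron--Frobenius property of irreducible Metzler flows for the strict positivity dichotomy.

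First, for invariance of $\Omega$ I would sum the equations of \eqref{mainsys} within each patch $p$. The nonlinear cross-terms proportional to $\beta$, $k$ and $\gamma$ cancel pairwise between the $S$-, $I$- and $D$-equations, and only the demographic and diffusive contributions survive, yielding
\[
\dt \Sigma_p \;=\; r_p(1-\Sigma_p) + \diffrate (\D \mathbf{\Sigma})_p .
\]
Assumption~\ref{MatrixD}-(iii) gives $\D\un=\zero$, so $\mathbf{\Sigma}\equiv \un$ is a stationary solution of this linear equation and is preserved in time. Nonnegativity is then the standard subtangent condition on the boundary of the nonnegative orthant: when a coordinate vanishes (say $I_p^i=0$), assumption~\ref{MatrixD}-(i) makes the diffusive term reduce to $\diffrate\sum_{k\neq p}d_{pk}I_k^i\ge 0$, the loss terms proportional to $I_p^i$ disappear, and the surviving forcing $\beta_p^i J_p^i S_p$ is nonnegative; identical checks work for $\bS$ and $\bD^{ij}$. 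Combined with $\mathbf{\Sigma}\equiv\un$, this confines every coordinate to $[0,1]$, so $\Omega$ is positively invariant.

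For the dichotomy under assumption~\ref{MatrixD}-(ii) and $\diffrate>0$, I would fix a group $\bx\in\{\bS,\bI^i,\bD^{ij}\}$ and rewrite its equation as a cooperative differential inequality
\[
\dt \bx \;\geq\; A(t)\bx, \qquad A(t) \;=\; \diffrate\D - M(t),
\]
where $M(t)$ is a nonnegative diagonal matrix collecting the local loss rates, uniformly bounded on $\Omega$. The off-diagonal entries of $A(t)$ equal $\diffrate d_{pk}\geq 0$ and inherit the irreducibility of $\D$, so the fundamental matrix $\Phi(t,s)$ of $\dt \bz=A(t)\bz$ is a strictly positive matrix for every $t>s$ by Perron--Frobenius for irreducible Metzler operators. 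The classical comparison for cooperative systems gives $\bx(t)\geq \Phi(t,s)\bx(s)$, so that, as soon as $\bx$ is not identically zero on $[0,t]$, it is coordinatewise strictly positive at time $t$; by continuity the dichotomy follows, and for $\bS$ the additional strictly positive forcing $r_p(1-S_p)\ge 0$ shows that the zero alternative is never realized.

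Finally, invariance of $E_0$ follows by direct substitution into \eqref{mainsys}. For the last claim, $\mathbf{\Sigma}(0)=\un$ together with $\bX(0)\neq E_0$ forces at least one vector $\bI^i$ or $\bD^{ij}$ to be not identically zero at $t=0$; by the dichotomy that vector is coordinatewise strictly positive for every $t>0$, so $\sum_i I_p^i(t)+\sum_{ij} D_p^{ij}(t)>0$ in every patch, equivalently $\bS(t)<\un$, hence $\bX(t)\in\Omega_0$. The only delicate point I anticipate is justifying the comparison $\bx(t)\geq\Phi(t,s)\bx(s)$, since $M(t)$ depends on the unknown solution; this is settled by a Gronwall-type argument that exploits the Metzler structure of $A(t)$ and the uniform bound on $M(t)$ available on the positively invariant set $\Omega$.
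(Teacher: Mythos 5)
Your proof is correct and follows essentially the same route as the paper: the invariance of $\Omega$ via the patchwise conservation law $\dt\mathbf{\Sigma}=\mathrm{diag}(r_p)(\un-\mathbf{\Sigma})+\diffrate\D\mathbf{\Sigma}$ with $\un$ stationary by Assumption~\ref{MatrixD}-(iii), and the nonnegativity/strict-positivity claims via the Metzler structure and irreducibility, which the paper simply labels ``classical'' and which you spell out with the subtangent condition and the positivity of the fundamental matrix of a cooperative irreducible system. Your version is more detailed than the paper's but adds no genuinely different idea, and the final deductions about $E_0$ and $\Omega_0$ match.
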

	\begin{proof}
		The non-negativity is classical and result from the structure of the system and the fact that $\mathcal{D}$ is Metzler.
		The positivity under the irreducibility assumption is also classical.

		 The fact that the total biomass $\mathbf{\Sigma}$  is equal to $\un$ is a consequence of  assumption \ref{MatrixD}-(iii) together with the structure of the model. Indeed, summing all the equation of \eqref{mainsys} yields to the vectorial equation
		\begin{equation}\label{total}\dt \mathbf{\Sigma}=\text{diag}(r_1,\cdots,r_P)(\un-\mathbf{\Sigma})+\diffrate\D\mathbf{\Sigma}.
		\end{equation}
		By \ref{MatrixD}-(iii), $\un$ is a stationary solution of \eqref{total} which end the proof. 
	\end{proof}

	\subsection{Main assumptions}
	
	The overall description of the dynamics of \eqref{mainsys} is out of the scope of this paper. We reduce the study to  the quasi-neutral case. 
	Let $\eps>0$ be a small parameters.  
	
	\begin{hyp}[$\eps$-Quasi-Neutrality]\label{qneutral} We assume that all strain-dependent parameters are $\varepsilon$-close (see table \ref{tabledef}) for a small enough $0<\eps\ll 1$.
	\end{hyp}
	
	Even in this case, the impact of $\delta$ change drastically the phenomenon. For $\diffrate\to +\infty$ we obtain an homogeneous SIDS system averaging in space. The method is then standard and similar to the ones described in details in \cite{le2023spatiotemporal}. 
	
	In this paper we assume that the migration is very slow that is, with the same $\eps$ than in assumption \ref{qneutral}.
	\begin{hyp}[$\eps$-Slow diffusion]\label{deps} There exist $d>0$ such that $\diffrate = \varepsilon d$. 
	\end{hyp}	
		
Under these two assumptions, we first analyze the behavior of the system for $\varepsilon = 0$. Then, we investigate the singular limit as $\varepsilon \to 0$, describing how the solution evolves and connecting it to the limiting dynamics.

\section{Strain neutrality and no migration: $\eps=0$.}
In this section we assume that assumptions \ref{qneutral} and \ref{deps} holds true for $\eps=0$. That is the system is fully neutrall (all strains are equivalent) and there is no migration.
\subsection{One strain model without  migration}
If $\eps=0$ and if there is only one strain, then the system is particularly simple since it consists on $P$ SIDS independent systems:

\begin{equation}\label{agregneut}
	\begin{cases}
		\dt S_p=r_p(1-S_p)+\gamma_p I_p+\gamma_p D_p -\beta J_p S_p \\
		\dt I_p= \beta_p J_p S_p -(r_p+\gamma_p)I_p-k_p \beta_p I_p J_p\\
		\dt D_p=k_p\beta_p I_p J_p-(r_p+\gamma_p) D_p 
	\end{cases}
\end{equation}
where $J_p=I_p+D_p$.

The sets  $\Upsilon=\{(S,I,D)\in [0,1]^3,\; S+I+D=1\}$, $e_0=\{(1,0,0)\}$ and $\Upsilon_0=\Upsilon\setminus e_0$ are  positively invariant.

Let us remark that if  $(S_p,I_p,D_p)\in\Upsilon$, it comes $J_p=I_p+D_p=1-S_p$. Hence the dynamics of the first equation of \eqref{agregneut} consist on the single equation 
$$\dt S_p=(1-S_p)(r_p+\gamma_p-\beta_p S_p)$$ \\
The dynamics on each patch $p$ is then straightforward :
\begin{lemma}\label{lemmaneutre}
	Let $p\in\lrb$. And let $(S_p,I_p,D_p)\in\Upsilon$ be a solution of \eqref{agregneut}.
	
\begin{itemize}\item[(i)]	If $\beta_p\leq r_p+\gamma_p$ then $\lim\limits_{t\to+\infty}(S_p(t),I_p(t),D_p(t))=(1,0,0)$. 
	
\item[(ii)]	If $\beta_p>r_p+\gamma_p$ then if $(S_p(0),I_p(0),D_p(0))\in\Upsilon_0$ then $\lim\limits_{t\to+\infty}(S_p(t),I_p(t),D_p(t))= (S_p^{*},I_p^{*},D_p^{*})$
	where
	$$S_p^{*}=\dfrac{r_p+\gamma_p}{\beta_p},\; I_p^{*}=\dfrac{\beta_p (1-S_p^{*})S_p^{*}}{r_p+\gamma_p+k_p\beta_p (1-S_p^{*})},\; D_p^{*}=\dfrac{k_p\beta_p(1-S_p^{*})I_p^{*}}{r_p+\gamma_p}.$$ 
	For latter use, we define also $T_p^*=1-S_p^*=I_p^*+D_p^*$.
	
Moreover $(S_p^{*},I_p^{*},D_p^{*})$ is exponentially stable that is, there exist $M_p>0$ and $\mu_p>0$ such that
$$\forall t\geq 0,\; |S_p(t)-S_p^*|+|I_p(t)-I_p^*|+|D_p(t)-D_p^{*}|\leq M_pe^{-\mu_p t}.$$
\end{itemize}
%	In order to deals with all the possible cases in once, we define
%	
%	\begin{equation}\label{defstars}
%		S_p^*=\min(1,S_p^{\diamond}),\quad I_p^*=\max (0, I_p^\diamond)\text{ and } D_p^*=\max(0,D_p^{\diamond}).
%	\end{equation}
%	Hence for any initial condition in $\Upsilon_0$ we have $\lim\limits_{t\to+\infty}(S_p(t),I_p(t),D_p(t))= (S_p^*,I_p^*,D_p^*)$.
	
\end{lemma}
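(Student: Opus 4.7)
The plan is to exploit the fact that on the invariant simplex $\Upsilon$ we have $J_p = I_p + D_p = 1 - S_p$, which collapses the first equation of \eqref{agregneut} to the scalar logistic-type equation
$$\dt S_p = (1-S_p)(r_p + \gamma_p - \beta_p S_p),$$
whose right-hand side has roots $S_p = 1$ and $S_p = S_p^\star := (r_p+\gamma_p)/\beta_p$. A direct sign analysis on $[0,1]$ handles both cases. In case (i), $\beta_p \leq r_p + \gamma_p$ forces $S_p^\star \geq 1$, so the right-hand side is nonnegative on $[0,1]$, $S_p(t)$ increases to $1$, and since $I_p + D_p = 1 - S_p \to 0$ with both terms nonnegative, we conclude $I_p(t), D_p(t) \to 0$. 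In case (ii), $S_p^\star \in (0,1)$, and the right-hand side is positive on $(0,S_p^\star)$, negative on $(S_p^\star,1)$; any initial condition in $\Upsilon_0$ has $S_p(0) < 1$ (since $S_p(0)=1$ would force $(1,0,0) \notin \Upsilon_0$), so $S_p(t) \to S_p^\star$ monotonically, with $T_p(t) = 1 - S_p(t) \to T_p^\star = 1 - S_p^\star$.

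Next I would exploit the triangular structure of the system in $(S_p, D_p)$ coordinates (recovering $I_p = 1 - S_p - D_p$ from the simplex constraint). The equation for $D_p$ becomes
$$\dt D_p = k_p \beta_p (1 - S_p - D_p)(1 - S_p) - (r_p + \gamma_p) D_p,$$
which is asymptotically autonomous because $S_p(t) \to S_p^\star$. The limiting scalar linear ODE in $D_p$ has the globally attracting equilibrium $D_p^\star = k_p \beta_p (T_p^\star)^2 / (r_p + \gamma_p + k_p \beta_p T_p^\star)$; a short algebraic manipulation using $\beta_p S_p^\star = r_p + \gamma_p$ checks this is the same as the expression for $D_p^\star$ in the statement, and $I_p^\star = T_p^\star - D_p^\star$ recovers the formula for $I_p^\star$. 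Convergence $D_p(t) \to D_p^\star$ follows by standard theory of asymptotically autonomous scalar ODEs (or directly via variation-of-constants with the time-varying coefficient), and then $I_p(t) \to I_p^\star$.

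For exponential stability I would linearize at $(S_p^\star, D_p^\star)$. Because the $S_p$-equation does not depend on $D_p$, the Jacobian is lower-triangular with diagonal entries $\partial_{S_p}\{(1-S_p)(r_p+\gamma_p - \beta_p S_p)\}|_{S_p^\star} = -\beta_p T_p^\star$ and $-(r_p + \gamma_p + k_p \beta_p T_p^\star)$, both strictly negative. Hence the equilibrium is hyperbolic and (locally) exponentially stable, which by Hartman-Grobman gives constants $M_p, \mu_p > 0$ yielding the stated bound for any initial condition in a neighborhood of the equilibrium; combining with the global attraction already established (any trajectory starting in $\Upsilon_0$ eventually enters this neighborhood in finite time), we obtain the full exponential estimate on $[0,\infty)$, with $M_p$ possibly depending on the initial data.

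The only genuine subtlety is the passage from the asymptotic behavior of $S_p$ to that of $D_p$: one must ensure the asymptotically autonomous reduction does not lose any orbit. This is the main technical step, but it is tame here because the limit equation in $D_p$ is linear with a stable equilibrium, so no Thieme-type pathologies arise and the argument reduces to a Gronwall estimate. Everything else is routine sign analysis and a triangular linearization.
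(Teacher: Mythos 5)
Your proof is correct and follows essentially the same route the paper intends: the paper reduces the first equation on the invariant simplex $\Upsilon$ to the scalar equation $\dt S_p=(1-S_p)(r_p+\gamma_p-\beta_p S_p)$ and then declares the rest ``straightforward,'' and your sign analysis, the triangular/asymptotically-autonomous treatment of the linear $D_p$-equation, and the linearization at the hyperbolic sink are exactly the routine details being elided. The only cosmetic point is that Hartman--Grobman gives a topological conjugacy rather than rates, so the local exponential bound should be attributed to the standard principle of linearized stability for a hyperbolic sink.
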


\subsection{$N-$strain neutral model and $\eps=0$}\label{sec:3.2}
The aim of this section is to study the system \eqref{mainsys} in $\Omega_0$ assuming that $N$ strains are equivalent in their traits, and all parameters are given by the table \ref{tabledef} with $\eps=0$. 

The system reads 
\begin{equation}\label{sysNeutral}
	\begin{cases}
		\dt S_p=r_p(1-S_p)+\gamma_p \sum\limits_{i=1}^N I^i_p+\gamma_p \sum\limits_{(i,j)\in\lb1,N\rb^2}D_p^{ij} -\beta  S_p\sum\limits_{i=1}^N J_p^i \\
		\dt I_p^i= \beta_p J^i_p S_p -(r_p+\gamma_p)I^i_p-k_p \beta_p I_p \sum\limits_{j=1}^NJ_p^j\\
		\dt D_p^{ij}=k_p\beta_p I_p^i J_p^j-(r_p+\gamma_p) D_p^{ij} 
	\end{cases}
\end{equation}

We insist on the fact that in this section the patches are disconnected.

%Moreover, since the other eigenvalue of $A_p^*$ is negative we get $\lim\limits_{t\to+\infty}X_p^i(t)= z_{i,p} X_p^*$. 
The following result describe the dynamics for the Neutral model.
\begin{theo}[Dynamics under strain neutrality and no migration]\label{thNeutral}
	Assume that $\forall p\in\lrb$, $\beta_p>\gamma_p+r_p$. 
	Let $(\bS,(\bI^i)_{i\in\lrbN},(\bD^{ij})_{(i,j)\in\lrbN^2})$ be a solution of \eqref{sysNeutral} with initial values in $\Omega_0$ and let $(S_p^*,I_p^*,D_p^*)$ be as in the lemma \ref{lemmaneutre}.\\
	Denote the simplex  $\Sigma=\{z=(z^1,\cdots,z^N)\in[0,1]^N,\; \sum_{i=1}^N z^i =1\}$.
	
	Then for any $p\in\lrb$, $\lim\limits_{t\to+\infty}S_p(t)=S_p^*$ and 
	 there exists  a probability distribution $\bz_p=(z_p^1,\cdots,z_p^N)\in\Sigma$, such that 
	
	$$\lim_{t\to+\infty}I_{p}^i(t)= I_p^*z_{i}^p,\quad \lim_{t\to+\infty}D_p^{ij}(t)=D_p^* z_{p}^i z_{p}^j.$$ 
\end{theo}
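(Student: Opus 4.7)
With $\varepsilon=0$ the connectivity term vanishes in \eqref{sysNeutral}, so the $P$ patches decouple and I fix $p\in\lrb$ and work within a single patch. Introduce the aggregated variables $I_p:=\sum_i I_p^i$, $D_p:=\sum_{i,j}D_p^{ij}$, and $T_p:=I_p+D_p=1-S_p$. Summing the strain-indexed equations of \eqref{sysNeutral}, and using the identity $\sum_i J_p^i = I_p+D_p$ (which follows from $\mathbb{P}_p^{(i,j)\to i}+\mathbb{P}_p^{(i,j)\to j}=1$), one checks that $(S_p,I_p,D_p)$ solves the one-strain SIDS system \eqref{agregneut}. Since $S_p(0)<1$ by the assumption that the initial datum lies in $\Omega_0$, Lemma~\ref{lemmaneutre}(ii) yields $(S_p,I_p,D_p)(t)\to(S_p^*,I_p^*,D_p^*)$ exponentially fast; in particular $I_p(t)$ and $T_p(t)$ are bounded away from $0$ for $t$ large.

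\textbf{Frequency dynamics.} Define the two frequency vectors
\[
z_p^i := \frac{I_p^i}{I_p},\qquad \zeta_p^i := \frac{J_p^i}{T_p},
\]
both belonging to the simplex $\Sigma$. A direct calculation using the strain neutrality of all parameters produces the coupled pair
\[
\dt z_p^i = a(t)\bigl(\zeta_p^i-z_p^i\bigr),\qquad \dt\zeta_p^i = b(t)\bigl(\zeta_p^i-z_p^i\bigr),
\]
where $a(t)=\beta_p S_p T_p/I_p$ and $b(t)$ is a positive multiple of $k_p\beta_p I_p$ (the multiple being the common value of $1-\mathbb{P}_p^{(i,j)\to i}$, which is strain-independent by neutrality). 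Crucially, every term that carries a free strain index drops out of the numerator, so the right-hand sides depend only on the single scalar gap $w_p^i := \zeta_p^i-z_p^i$, which satisfies the scalar linear ODE $\dt w_p^i = \bigl(b(t)-a(t)\bigr) w_p^i$.

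\textbf{Contraction and limit of $z_p^i$.} Plugging the equilibrium identities from Lemma~\ref{lemmaneutre} into $b-a$, one checks that $\lim_{t\to\infty}\bigl(b(t)-a(t)\bigr)=-A_p^*<0$; the surviving terms are $-(r_p+\gamma_p)$ together with a strictly negative multiple of $k_p\beta_p$. Combined with the exponential convergence of the aggregated variables, this gives $b(t)-a(t)\le -A_p^*/2$ for $t$ large, and Gronwall yields $|w_p^i(t)|\le C e^{-A_p^* t/2}$. Feeding this back into $\dt z_p^i = a(t) w_p^i$ with $a(\cdot)$ bounded, the integral $\int_{t_0}^{\infty}|\dt z_p^i|\,dt$ is finite; the Cauchy criterion then forces $z_p^i(t)\to z_p^{i,\infty}\in[0,1]$ with $\sum_i z_p^{i,\infty}=1$, and correspondingly $\zeta_p^i \to z_p^{i,\infty}$. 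This delivers $I_p^i(t) = I_p(t) z_p^i(t) \to I_p^* z_p^{i,\infty}$.

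\textbf{Limit of $D_p^{ij}$ and main obstacle.} Rewrite the coinfection equation as the asymptotically autonomous linear ODE
\[
\dt D_p^{ij} = k_p\beta_p I_p(t)T_p(t) z_p^i(t)\zeta_p^j(t) - (r_p+\gamma_p) D_p^{ij},
\]
whose forcing converges to $k_p\beta_p I_p^* T_p^* z_p^{i,\infty}z_p^{j,\infty} = (r_p+\gamma_p) D_p^* z_p^{i,\infty}z_p^{j,\infty}$ by the last equilibrium identity of Lemma~\ref{lemmaneutre}. Standard asymptotic arguments for such ODEs then yield $D_p^{ij}(t) \to D_p^* z_p^{i,\infty}z_p^{j,\infty}$, completing the proof with the probability distribution $\bz_p = (z_p^{1,\infty},\dots,z_p^{N,\infty})$. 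The main technical obstacle is the third paragraph: keeping $b(t)-a(t)$ uniformly bounded away from zero along the entire transient (not only at equilibrium), and converting the exponential decay of the gap $w_p^i$ into a genuine limit for $z_p^i$ via the Cauchy criterion; both are handled by the exponential convergence of the aggregated variables combined with Gronwall.
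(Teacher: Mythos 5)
Your proof is correct, but it follows a genuinely different route from the paper's. The paper keeps the pair $X_p^i=(I_p^i,D_p^i)^T$, writes $\dt X_p^i=A_p(X_p(t))X_p^i$, and invokes the Perron--Frobenius theorem for the limiting Metzler matrix $A_p^*$ to split $X_p^i$ into a kernel component $u_p^i=\omega_p^*X_p^i$ (which converges by Lemma~\ref{lemma:appendixA}) and a decaying component $\xi_p^i$ (handled by Lemma~\ref{lemma:proj}). You instead normalize by the aggregates, setting $z_p^i=I_p^i/I_p$ and $\zeta_p^i=J_p^i/T_p$, and observe that both derivatives are proportional to the single gap $w_p^i=\zeta_p^i-z_p^i$, which then solves a scalar linear ODE whose coefficient $b(t)-a(t)$ converges to $-\tfrac12 k_p\beta_p(2T_p^*-I_p^*)-(r_p+\gamma_p)$ --- precisely $\mathrm{trace}(A_p^*)$, i.e.\ the nonzero eigenvalue of the paper's matrix, which confirms the two arguments are consistent: your time-dependent frequency coordinates are an explicit diagonalization of the paper's spectral decomposition, with $w_p^i$ spanning the stable direction and $z_p^i$ the neutral one. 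What each approach buys: yours is more elementary and fully explicit, avoiding Perron--Frobenius and the two appendix lemmas at the cost of (i) requiring $I_p(t),T_p(t)>0$ so the frequencies are defined --- guaranteed for $t>0$ from initial data in $\Omega_0$, but worth one sentence, since one must start the frequency analysis at some $t_0>0$ --- and (ii) relying on the specific two-compartment structure that makes the diagonalizing combination guessable. The paper's coordinate-free Metzler/Perron--Frobenius argument is heavier here but, as the authors emphasize, extends to richer co-colonization structures where the other eigenvalues and eigenvectors are not explicitly computable. One cosmetic point: you reuse the symbol $A_p^*$ for a positive scalar decay rate, clashing with the paper's matrix of the same name; rename it.
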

\begin{proof}
The proof is divided in three steps.

\paragraph{First, aggregated variables} Let $p\in\lrb$ and assume that $\beta_p>r_p+\gamma_p$. Denote $I_p=\sum\limits_{i\in\lb1,N\rb} I_p^i$ and $D_p=\sum\limits_{(i,j)\in\lb1,N\rb^2} D_p^{i,j}$.
Remark that   $\sum\limits_{i\in\lb1,N\rb} J_p^i = I_p+D_p=(1-S_p)$.

Then $(S_p,I_p,D_p)$ satisfies \eqref{agregneut}.
Let us define $X_p(t)=(I_p,D_p)$ and $X_p^*=(I_p^*,D_p^*)^T$ (as defined  in lemma \ref{lemmaneutre}). Since the variables belong to $\Omega_0$ we have $(S_p,I_p,D_p)\in\Upsilon_0$ for any $p\in\lrb$ and the lemma \ref{lemmaneutre} yields 

\begin{equation}\label{limitagreg}
	\lim_{t\to+\infty}X_p(t)= X_p^*.
\end{equation}

\paragraph{Second: the whole attractor}  Define $D_p^i=\frac12\sum\limits_{j=1}^N (D_p^{ij}+D_p^{ji})$ which yields $J_p^i=I_p^i+D_p^i$.

We denote  $X_p^i(t)=(I_p^i(t),D_p^i(t))^T$ and  $T_p(t)=I_p(t)+D_p(t)=1-S_p(t)$.

A short computation shows that $X_p^{i}$ satisfies the non autonomous linear system

\begin{equation}\label{defAP}\dt X_p^i =\begin{pmatrix} 
	\beta_pS_p(t)-(r_p+\gamma_p) - k_p \beta_p T_p(t)& \beta_p S_p(t) \\
	\dfrac12 k_p \beta_p (T_p(t)+I_p(t))& \dfrac12 k_p \beta_p I_p(t) -(r_p+\gamma_p)
\end{pmatrix} X_p^i
\end{equation}

which reads shortly

\begin{equation*}\label{Neutralsys}
	\dt X_p^i = A_p(X_p(t)) X_p^i	
\end{equation*}
%with 

%\begin{equation}\label{defAP}A_p(I_p,D_p)=\begin{pmatrix} 
%		\beta_p(1-I_p-D_p)-(r_p+\gamma_p) - k_p \beta_p (I_p+D_p)& \beta_p (1-I_p-D_p) \\
%		\dfrac12 k_p \beta_p (2I_p+D_p)& \dfrac12 k_p \beta_p I_p -(r_p+\gamma_p)
%\end{pmatrix}\end{equation}

From \eqref{limitagreg} we see that $\lim\limits_{t\to+\infty}A_p(X_p(t))= A_p^*$ where

\begin{equation}\label{def-Astar}
	A_p^*=\begin{pmatrix} 
		- k_p \beta_p T_p^*& \beta_p S_p^* \\
		\dfrac12 k_p \beta_p (T_p^*+I_p^*)& \dfrac12 k_p \beta_p I_p^* -(r_p+\gamma_p)
	\end{pmatrix}
\end{equation}
wherein we have set $T_p^*=1-S_p^*=I_p^*+D_p^*$.

This limit $A_p^*$ is clearly an irreducible  Metzler matrix and it is straightforward to verify that
$A_p^* X_p^*= 0$. $0$ is then an eigenvalue of $A_p^*$ with a positive eigenvector $X_p^*$. Then, by the Perron-Frobenius theorem applying to Metzler matrices (for instance Theorem 9.4 in \cite{Bullo2024}), $0$ is the principal eigenvalue of $A_p^*$ and there exists a unique  left eigenvector $\omega_p^*=(\phi_p^*,\psi_p^*)>0$ satisfying $\omega_p^* X_p^*=1$ and $\omega_p^* A_p^*=0$.
Moreover the other eigenvalue\footnote{Here, since $A_p^*$ is a $2\times 2$ matrix, there is only one  other eigenvalue which is explicitly $trace(A_p^*)=-\dfrac12 k_p \beta_p (2T_p^*-I_p^*) -(r_p+\gamma_p)$ which is clearly negative. In a more general setting we cannot know explicitly the other eigenvalue but the conclusion remains the same.} is negative.

An explicit computation gives the following expression

\begin{equation}\label{explicit-omega}
	\omega_p^*=(\phi_p^*,\psi_p^*) \text{ with } \phi_p^*=\dfrac{T_p^*+I_p^*}{2(T_p^*)^2-I_p^* D_p^*} \text{ and } \psi_p^*=\dfrac{2T_p^*}{2(T_p^*)^2-I_p^* D_p^*}.
\end{equation}

Define now $u_p^i(t)=\omega_p^* X_p^i(t)$  the component of $X_p^i$ on the kernel of $A_p^*$.
Note also $\xi_p^i(t)=\Pi_p X_p^i(t):=X_p^i(t)-(\omega^*\cdot X_p^i) X_p^*$ the component of $X_p^i$ orthogonal to $\omega_p^*$.  Remark that for all $t\geq 0$ we have $u_p^i(t)\in[0,\phi_p^*+\psi_p^*]$ and we may write 
$$X_p^i(t)=u_p^i(t) X_p^*+\xi_p^i(t).$$

%Since $\xi_p^{i}(t)$ belongs to the orthogonal to the kernel of $A_p^*$, and that the other eigenvalue of $A_p^*$ is negative, we got 
%$|\xi_p^i(t)|\leq C e^{-\mu_0 t}$
%for some $C>0$ and $\mu_0>0$

 we got
\begin{equation}\label{fastdyn}
		\begin{cases}
        \dt \xi_p^i(t)=A_p^* \xi_p^i (t) + \Pi_p (A_p(X_p(t))-A_p^*)X_i^p(t)\\
        \dt u_{p}^i(t)=\omega_p^*\left(A_p(X_p(t))-A_p^*\right)X_p^i(t)
\end{cases}
\end{equation}

By the lemma \ref{lemma:appendixA},   there exists $z_p^i\geq 0$ such that
 $\lim\limits_{t\to+\infty} u_{p}^i(t)= z_{p}^i$. 
 
 Moreover, we have 
 $$\sum_{i=1}^N z_{p}^i = \omega_p^* \lim\limits_{t\to+\infty} \sum_{i=1}^N X_p^i(t) = \omega_p^* X_p^* =1.$$
 
 On an other hand, the spectral properties of $A_p^*$ implies that there exists $\alpha_1>0$ such that $$\forall v\in\Pi_p \R^2,\; \|e^{t A_p ^*} v \| \leq  e^{-\alpha_1 t}\|v\|.$$
 Hence, by virtue of the lemmas \ref{lemma:appendixA} and \ref{lemma:proj}, there exist $M>0$ and $\alpha>0$ such that
\begin{equation}\label{estimate:Xpi}\|X_p^i(t)-z_ p^i X_p^*\|\leq M e^{-\alpha t}.
\end{equation}

\paragraph{Third: back to the original variables}

From \eqref{estimate:Xpi}, then  by definition of $X_p^i$, one gets $I_p^i\to z_p^i I_p^*$ and $J_p^i\to z_p^i T_p^*$. Thus,  the equation 
$$\dt D_p^{ij} = k_p \beta_p I_p^i J_p^j - (r_p+\gamma_p)D_p^{ij}$$ yields
$$\lim_{t \to +\infty} D_p^{ij}= z_p^i z_p^j \dfrac{ k_p \beta_p I_p^* T_p^*}{r_p+\gamma_p}=z_p^i z_p^j D_p^*.$$

\end{proof}
\section{Quasi-Neutrality among strains}
Let $(\bS,(\bI^i)_i,(\bD^{ij})_{ij})\in\Omega_0$ be a solution of \eqref{mainsys}.
We assume also that for any $p\in\lrb$, $\beta_p>\gamma_p+r$.
We follow the two same steps than in the neutral model.

We assume that assumptions \ref{qneutral} and \ref{deps} holds true for a small enough $0<\eps\ll 1$. All the parameters are given in the table \ref{tabledef}.

\begin{table}\label{parameters}
	\centering
	\begin{tabular}{|c|c|c|}
		\hline
		Notation&Quasi-Neutral Formulation&Interpretation \\
		\hline
		\hline
		\multirow{2}{*}{$S_p(t)$}&\multirow{2}{*}{$S_p^*$}&Proportion of susceptible hosts \\
		&&in the patch $p$ \\
		\hline
			\multirow{2}{*}{$T_p(t)=1-S_p(t)$}&\multirow{2}{*}{$T_p^*=1-S_p^*$}&Proportion of all infected hosts\\
		&&in the patch $p$ \\
		\hline
		
		\multirow{2}{*}{$I_p^i(t)$}&\multirow{2}{*}{$I_p^* z_p^i(\eps t)$}&Proportion of hosts in patch $p$\\
		&& singly-infected by strain $i\in [1,..,N]$\\
		
		\hline 
		\multirow{2}{*}{$D_p^{i,j}(t)$}&\multirow{2}{*}{$D_p^* z_p^i(\eps t)z_p^j(\eps t)$}& Proportion of hosts in patch $p$\\&& co-infected first by strain $i$ then $j$.\\
		\hline
		\hline
			\multirow{2}{*}{$\diffrate$}&\multirow{2}{*}{$\eps d$}& Overall host migration rate \\
		&& between patches\\ 
			\hline
		
		\multirow{2}{*}{$r_p$}&\multirow{2}{*}{$r_p$}& Birth rate (equal to death rate) \\
		&&of hosts in patch $p$\\
		\hline
		\hline
		\multirow{2}{*}{$\beta_p^i$}&\multirow{2}{*}{$\beta_p+\eps b_p^i$}& Per-capita host transmission rate \\
		&& of infection by strain $i$ in patch $p$\\
		\hline
		\multirow{2}{*}{$\gamma_p^i$}&\multirow{2}{*}{$\gamma_p+\eps c_p^i$}& Clearance rate of infection in patch $p$\\
		&& for single-infection by the strain $i$ \\
		\hline
		\multirow{2}{*}{$\gamma_p^{i,j}$}&\multirow{2}{*}{$\gamma_p+\eps c_p^{i,j} $}& Clearance rate in patch $p$\\
		&& for co-infection by strains $i$ and $j$\\
		\hline
		\multirow{2}{*}{$k_p^{i,j}$}&\multirow{2}{*}{$k_p+\eps \alpha_p^{i,j}$}& Susceptibility to coinfection by strain $j$ in patch $p$\\
		&& for hosts singly-infected by strain $i$\\
		\hline
		
		\multirow{2}{*}{$\mathbb{P}_p^{(i,j)\to i}=1-\mathbb{P}_p^{(i,j)\to j}$} &\multirow{2}{*}{$\frac12+\eps w_p^{i,j}$}& Probability for a host in patch $p$ \\&&co-infected by the strains $i$ then $j$ to transmit strain $i$\\
		\hline
		
		%\multirow{2}{*}{$\mathbb{P}_p^{(j,i)\to i}$}&\multirow{2}{*}{$\frac12-\eps w_p^{j,i}$}& Probability for a host in patch $p$ \\&&co-infected by the strains $j$ then $i$ to transmit $i$\\
		\hline
	\end{tabular}
	\caption{\textbf{Definition of the key variables and the parameters in the SIS model with coinfection and many strains\eqref{mainsys}.} The second column gives the formulation for the parameters and the state variables in the Quasi-Neutral regime. The formula for $S_p^*$, $I_p^*$ and $D_p^*$ is given in lemma \ref{lemmaneutre}. We use the term infection to refer to propagation of the infectious agent, but the same mathematical description applies for a general colonization process. %In this case, the state variables are express in term of a product of  the equilibrium $(S_p^*,I_p^*,D_p^*)_{k\in\K}$ of the no-strain system together with the frequency of each strains $(z_i)_{1\leq i\leq N}$. These frequencies follow the slow dynamics explicitly given by the replicator equation \eqref{main-replicator}
		. }
	\label{tabledef}
\end{table}

% \eqref{mainsys} in the slow time scale $\tau=\eps t$:
%\begin{equation}\label{sysqNeutral-slow}
%	\begin{cases}
%		\eps\dtau S_p=r_p(1-S_p)+\gamma_p \sum\limits_{i=1}^N I^i_p+\gamma_p \sum\limits_{(i,j)\in\lb1,N\rb^2}D_p^{ij} -\beta  S_p\sum\limits_{i=1}^N J_p^i+O(\eps) \\
%		\eps\dtau I_p^i= \beta_p J^i_p S_p -(r_p+\gamma_p)I^i_p-k_p \beta_p I_p \sum\limits_{j=1}^NJ_p^j+O(\eps)\\
%		\eps\dtau D_p^{ij}=k_p\beta_p I_p^i J_p^j-(r_p+\gamma_p) D_p^{ij} +O(\eps)
%	\end{cases}
%\end{equation}

\subsection{The aggregated variables}

Denote $I_p=\sum_{i=1}^N I_p^i$ and $D_p=\sum_{(i,j)\in\lrbN^2} D_p^{i,j}$ and $X_p=(I_p,D_p)$. Remark that $\sum_i J_p^i = I_p+D_p=T_p=(1-S_p)$, we get the system on the aggregated variables that we write in the slow time scale $\tau=\eps t$ in order to apply the Tikhonov theorem in the next section \cite{Tikhonov1952}.
\begin{equation}\label{QN-aggreg}
	\begin{cases}
		\eps\dtau S_p=	r_p(1-S_p)+\gamma_p I_p+\gamma_p D_p -\beta J_p S_p +\eps \mathtt{f}_p^0(\tau)+\eps d (\mathcal{D}\bS)_p+o(\eps)\\
		\eps\dtau I_p=\beta_p J_p S_p -(r_p+\gamma_p)I_p-k_p \beta_p I_p J_p+\eps \mathtt{f}_p^1(\tau)+\eps d (\mathcal{D}\bI)_p+o(\eps)\\
		\eps\dtau  D_p=k_p\beta_p I_p J_p-(r_p+\gamma_p) D_p+\eps \mathtt{f}_p^2(\tau)+\eps d (\mathcal{D} \bD)_p+o(\eps)
	\end{cases}	
\end{equation}
where the functions $\mathtt{f}_p^{s}$, $s=0,1,2$ are differential and bounded functions arising from the first-order expansion in $\varepsilon$
 due to the quasi-neutrality assumption \ref{qneutral}.
We recall (see proposition \ref{prop:inv}) that the disease free equilibrium $E_0=(\un,0,0)$ is always a steady state of \eqref{QN-aggreg}.

By perturbation we obtain the following result if the migration is small enough.
\begin{lemma}\label{lemmaqneutre}  Assume that $\beta_p>r_p+\gamma_p$ for each $p\in\lrb$
	There exists $\eps_0>0$ such that for each $\eps\in(0,\eps_0)$,  
	if assumption  \ref{qneutral} and \ref{deps} hold then $E_0$ is unstable and for any initial values in $\Omega\setminus\{E_0\}$, 
	the solution  $(\bS^\eps(\tau),\bI^\eps(\tau),\bD^\eps(\tau))\in\Omega$ of \eqref{QN-aggreg} satisfies
	$\forall \tau>0$, $(\bS^\eps(\tau),\bI^\eps(\tau),\bD^\eps(\tau))=(\bS^*,\bI^*,\bD^*)+\mathcal{O}(\eps)+\mathcal{O}(\eps d).$ More precisely, there exists two continuous functions $M_1,M_2\in C^0\left((0,+\infty)\times [0,\eps_0], (\R^P)^3\right)$ such that
	 $$\forall\tau>0 \text{ and } \eps\in[0,\eps_0],\;	(\bS^\eps(\tau),\bI^\eps(\tau),\bD^\eps(\tau))=(\bS^*,\bI^*,\bD^*)+\eps M_1(\tau,\eps)+\eps d M_2(\tau,\eps).$$
	%$$\forall t\geq T_0, \|\bS^\eps(t)-\bS^*\|+\|\bI^\eps(t)-\bI^*\|+\|\bD^\eps(t)-\bD^*\|\leq M \eps.$$
	
	%In particular,  for any $p\in\lrb$ and $\tau>0$:
	% \begin{equation}
	%	\label{QN-aggreglim}
	%	S_p^\eps(\tau)-S_p^*=(1+d)O(\eps)\text{ and }X_p^\eps(\tau)-X_p^*=(1+d)O(\eps)
	%\end{equation}
\end{lemma}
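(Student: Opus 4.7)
The plan is to treat \eqref{QN-aggreg} as a regular perturbation of an equation that is singularly perturbed only in the classical Tikhonov sense, and to exploit the fact that at $\eps=0$ the aggregated system decouples into $P$ independent one-strain SIDS systems analyzed by Lemma \ref{lemmaneutre}. Switching back to fast time $t=\tau/\eps$ and setting $\eps=0$ gives precisely \eqref{agregneut} on each patch $p$, whose attractor under the condition $\beta_p>r_p+\gamma_p$ consists of the unstable disease-free point $(1,0,0)$ and the globally (on $\Upsilon_0$) exponentially stable endemic equilibrium $(S_p^*,I_p^*,D_p^*)$. The zero-order slow manifold is therefore the single point $\mathcal{M}_0=(\bS^*,\bI^*,\bD^*)\in\R^{3P}$, together with the repelling point $E_0$.

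First, I would establish normal hyperbolicity of $\mathcal{M}_0$. Because the fast system decouples over patches, the Jacobian at $(\bS^*,\bI^*,\bD^*)$ is block-diagonal with $3\times 3$ blocks given by the linearizations of \eqref{agregneut} at $(S_p^*,I_p^*,D_p^*)$. The exponential-stability conclusion of Lemma \ref{lemmaneutre}(ii) gives eigenvalues with real part $\leq -\mu_p<0$ for each block, so the full linearization has spectrum in the open left half-plane. Likewise, at $E_0$ the $\eps=0$ linearization decouples and its spectrum on each patch contains the eigenvalue $\beta_p-(r_p+\gamma_p)>0$, so $E_0$ is hyperbolic and unstable. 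By continuity of spectra, both properties persist for $|\eps|\leq\eps_0$ with $\eps_0$ small enough; in particular $E_0$ remains unstable for every $\eps\in(0,\eps_0)$.

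Second, I would apply Tikhonov's theorem (or equivalently Fenichel's persistence) to the perturbed system. For any initial condition in $\Omega\setminus\{E_0\}$, the (unperturbed) fast flow converges exponentially to $(\bS^*,\bI^*,\bD^*)$ by Lemma \ref{lemmaneutre}(ii). Hyperbolicity of $\mathcal{M}_0$ allows one to conclude that after a transient of fast time $O(\log(1/\eps))$ the perturbed trajectory enters and remains in an $O(\eps)$ neighborhood of $\mathcal{M}_0$, uniformly on $\tau\in(\tau_0,\infty)$ for any $\tau_0>0$. Since on the aggregated system the slow manifold reduces to a single point, the reduced slow dynamics is trivial and no further slow evolution occurs at leading order; all deviation from $(\bS^*,\bI^*,\bD^*)$ is a forced correction driven by the inhomogeneities $\eps\mathtt{f}_p^s(\tau)+\eps d(\D\cdot)_p+o(\eps)$ in \eqref{QN-aggreg}.

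Third, to obtain the precise decomposition $\eps M_1(\tau,\eps)+\eps d M_2(\tau,\eps)$, I would treat $\eps$ and $d$ as two independent smooth parameters and apply the implicit function theorem to the steady equation of \eqref{QN-aggreg} in a tubular neighborhood of $\mathcal{M}_0$. Because the unperturbed Jacobian is invertible (hyperbolicity), there is a smooth family of locally invariant, attracting sets depending on $(\eps,\eps d)$; Taylor-expanding this family at $(0,0)$ yields one linear contribution proportional to $\eps$ (coming from the $\mathtt{f}_p^s$ terms) and another proportional to $\eps d$ (coming from the migration term $\eps d\,\D\cdot$), giving exactly $\eps M_1+\eps d M_2$ with $M_1,M_2$ continuous in $(\tau,\eps)$. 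The main obstacle is to carry out this two-parameter expansion uniformly in $\tau>0$: one must combine the exponentially decaying fast-transient estimates (from normal hyperbolicity) with smooth dependence of the invariant manifold on parameters, and verify that the forcing terms $\mathtt{f}_p^s$ and $\D\bX$ enter linearly at the first order. Once this is done the stated expansion follows, and the instability of $E_0$ completes the statement.
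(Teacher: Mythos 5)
The paper offers no proof of this lemma beyond the one-line remark ``by perturbation,'' so your write-up is supplying the argument rather than competing with one; what you propose is a correct and essentially complete version of the intended perturbation argument. The key ingredients are all in place: at $\eps=0$ the aggregated system decouples into the $P$ systems \eqref{agregneut}, the endemic point is hyperbolic (the two eigenvalues on $\Upsilon$ from Lemma~\ref{lemmaneutre}(ii), plus the transverse eigenvalue $-r_p$ coming from $\dt\Sigma_p=r_p(1-\Sigma_p)$, which you should mention explicitly since Lemma~\ref{lemmaneutre} only speaks about the dynamics restricted to $\Upsilon$), $E_0$ has the unstable eigenvalue $\beta_p-(r_p+\gamma_p)>0$ and both properties persist for small $\eps$. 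Two refinements are worth making. First, your third step invokes the implicit function theorem on ``the steady equation,'' but \eqref{QN-aggreg} is non-autonomous: the forcings $\mathtt{f}_p^s(\tau)$ depend on $\tau$ through the slowly evolving strain frequencies. The clean way to get the decomposition $\eps M_1+\eps d M_2$ is variation of constants around the frozen hyperbolic equilibrium: since the linearization $A$ at $(\bS^*,\bI^*,\bD^*)$ generates an exponentially decaying semigroup and the perturbation enters as $\eps\mathtt{f}+\eps d\,\D\bX+o(\eps)$, the leading-order response $\int_0^t e^{(t-s)A}(\eps\mathtt{f}+\eps d\,\D\bX)\,ds$ splits linearly into an $\eps$-part and an $\eps d$-part, with the $o(\eps)$ remainder absorbed into $\eps M_1$; this is exactly the structure the statement asks for. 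Second, the transient time of the unperturbed flow is not uniform over $\Omega\setminus\{E_0\}$ (initial data near $E_0$ linger arbitrarily long), so the constants in $M_1,M_2$ necessarily depend on the initial condition; this is a feature of the lemma as stated, but your proof should acknowledge that the $O(\eps)$ bounds are for a fixed initial value (or uniform over compact subsets of $\Omega_0$ bounded away from $E_0$), which is all that is used downstream in Theorem~\ref{th:main}.
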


\subsubsection{The reduction}

By virtue of the Tikhonov theorem, we obtain the main result of this paper.
\begin{theo}\label{th:main}
Denote $(\bS,\bI^{1},\cdots, \bI^N,\bD^{11},\cdots,\bD^{1N},\bD^{21}\cdots,\bD^{NN})\in \Omega_0$  the  solution of 
	 \eqref{mainsys}. 
Denote also the simplex $\Sigma=\{\bz\in[0,1]^N,\;\sum_{i=1}^N z^i =1\}.$

	 Assume that the assumption \ref{MatrixD} on $\mathcal{D}$ is satisfied.
	 Then, there exists $\eps_0>0$ such that for each $\eps\in (0,\eps_0)$,  if the assumptions \ref{qneutral} and \ref{deps} hold true for the system \eqref{mainsys}, 
	 there exists $\tau_0>0$ and $\bz_0$ such that for any $T>\tau_0$, any $\tau\in[\tau_0,T]$
	  and   $\forall (i,j)\in\lrbN^2$:
	 $$\bS\left(\dfrac{\tau}{\eps}\right)=\bS^*+O(\eps),\quad \bI^i\left(\dfrac{\tau}{\eps}\right)=\bz^i(\tau)\bI^*+O(\eps),\quad \bD^{ij}\left(\dfrac{\tau}{\eps}\right)=\bz^i(\tau)\bz^j(\tau)\bD^*+O(\eps)$$

	where $\bz=(\bz_p)_{p\in\lrb}\in\Sigma^P=\Sigma\times\cdots\times\Sigma$ is the solution of \begin{equation}\label{finalequation-mat}\begin{cases}
	\dtau z_{p}^i=\Theta_p z_{p}^i\left((\Lambda_p \bz_p)_i-\bz_p \Lambda_p \bz_p\right)+ d (\mathcal{M}\bz^{i})_p,\\
    \bz(0)=\bz_0\in\Sigma^P,
    \end{cases}
\end{equation}	

wherein we have set for each $p\in\lrb$,$\Theta_p>0$ and  $\Lambda_p=(\lambda_p^{ij})_{(i,j)\in\lrbN^2}$
as in \eqref{def:lambda} and where
$\mathcal{M}=\left(m_{pk}\right)_{(p,k)\in\lrb^2}$  is a Metzler matrix defined by $m_{pk}=\left(d_{pk} \left(\omega_p^*\cdot X_k^*\right)\right)$ for $p\neq k$ and 
$m_{pp}=-\sum_{k\neq p} m_ {pk}$.

\end{theo}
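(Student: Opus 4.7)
The plan is to apply Tikhonov's singular perturbation theorem in the slow time $\tau=\eps t$, using Theorem \ref{thNeutral} to identify the slow manifold and Lemma \ref{lemmaqneutre} to pin the aggregated variables within $O(\eps)$ of the neutral equilibrium $(\bS^*,\bI^*,\bD^*)$. The fast subsystem (obtained at $\eps=0$) is exactly the neutral, no-migration model of Section 3, whose global attractor is the manifold
\[
\mathcal{S}=\bigl\{(S_p^*,\, z_p^i I_p^*,\, z_p^i z_p^j D_p^*)_{p,i,j}:\bz\in\Sigma^P\bigr\},
\]
parametrised by the strain-frequency vector $\bz=(\bz_p)_{p\in\lrb}$. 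Accordingly, I would take as slow variables the per-patch, per-strain quantities $u_p^i=\omega_p^*\cdot X_p^i$, where $X_p^i=(I_p^i,D_p^i)^T$ and $\omega_p^*$ is the left Perron eigenvector of the Metzler matrix $A_p^*$ from \eqref{def-Astar}--\eqref{explicit-omega}. The normalisation $\omega_p^*\cdot X_p^*=1$ combined with Theorem \ref{thNeutral} guarantees $u_p^i\to z_p^i$ along the fast flow and $\sum_i u_p^i=1$, so that $(u_p^i)_{p,i}$ gives a faithful parametrisation of $\mathcal{S}$ by $\Sigma^P$.

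I would then derive the slow ODE for $u_p^i$. In slow time the equation for $X_p^i$ takes the form $\eps\dtau X_p^i=A_p(X_p(\tau))X_p^i+\eps R_p^i(\tau,\eps)+\eps d(\D\bX^i)_p+o(\eps)$, where $R_p^i$ collects the $O(\eps)$ contributions produced by the quasi-neutral expansion of Table \ref{tabledef}. Projecting along $\omega_p^*$ annihilates the leading term, because $\omega_p^*A_p^*=0$ and, by Lemma \ref{lemmaqneutre}, $A_p(X_p(\tau))=A_p^*+O(\eps)$. Dividing by $\eps$ yields
\[
\dtau u_p^i=\omega_p^*\cdot\bigl[R_p^i+d(\D\bX^i)_p\bigr]+O(\eps).
\]
On $\mathcal{S}$ we have $X_k^i=z_k^i X_k^*+O(\eps)$, so the first contribution, evaluated through the explicit formula \eqref{explicit-omega} for $\omega_p^*$ and mimicking the non-spatial computation of \cite{le2023quasi,madec2020predicting}, reproduces the local replicator term $\Theta_p z_p^i\bigl((\Lambda_p\bz_p)_i-\bz_p^T\Lambda_p\bz_p\bigr)$; the migration contribution becomes $d\sum_k d_{pk}z_k^i(\omega_p^*\cdot X_k^*)$, which, after separating the diagonal $k=p$ term via $\D\un=\zero$, is exactly the action of the announced Metzler matrix $\mathcal{M}$ on $\bz^i=(z_k^i)_{k\in\lrb}$.

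Rigor then comes from Tikhonov's theorem. Theorem \ref{thNeutral} already supplies exponential attraction of the fast flow to $\mathcal{S}$ (cf.\ estimate \eqref{estimate:Xpi}), which is precisely the hyperbolicity hypothesis required. The formal expansion above is upgraded to the quantitative statement $\bS(\tau/\eps)=\bS^*+O(\eps)$, $\bI^i(\tau/\eps)=\bz^i(\tau)\bI^*+O(\eps)$ and $\bD^{ij}(\tau/\eps)=\bz^i(\tau)\bz^j(\tau)\bD^*+O(\eps)$, uniformly on $[\tau_0,T]$; the initial layer $[0,\tau_0]$ absorbs the transient during which the aggregated variables relax to $(\bS^*,\bI^*,\bD^*)$ and the fast modes $\xi_p^i$ decay exponentially, and $\bz_0$ is identified from the value of $(u_p^i)$ at the end of this layer.

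The main obstacle I expect lies in the uniformity needed to apply Tikhonov's theorem: the attraction rates produced in the proof of Theorem \ref{thNeutral} depend on the patch-specific spectral gap of $A_p^*$ and must be controlled uniformly as $\bz$ ranges over the compact simplex $\Sigma^P$, which forces a careful use of the explicit formula \eqref{explicit-omega} for $\omega_p^*$ together with the strict negativity of the second eigenvalue of $A_p^*$. A secondary, more bookkeeping difficulty is deriving the precise entries $m_{pk}=d_{pk}(\omega_p^*\cdot X_k^*)$ of $\mathcal{M}$: because $\omega_p^*$ varies across patches, migration does not reduce to a simple weighted Laplacian, and verifying that $\mathcal{M}$ is Metzler with $\mathcal{M}\un=\zero$ (reflecting $\sum_i z_p^i\equiv 1$) has to be read off directly from the projection identity rather than from a generic diffusion operator.
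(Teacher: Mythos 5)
Your overall strategy --- slow time $\tau=\eps t$, slow variables $u_p^i=\omega_p^*\cdot X_p^i$, projection along the left Perron eigenvector of $A_p^*$, exponential decay of the complementary component, and Tikhonov --- is the same as the paper's. But there is a genuine gap in your derivation of the slow equation. You assert that projecting along $\omega_p^*$ ``annihilates the leading term'' because $\omega_p^*A_p^*=0$ and $A_p(X_p(\tau))=A_p^*+O(\eps)$, and after dividing by $\eps$ you keep only $\dtau u_p^i=\omega_p^*\cdot\bigl[R_p^i+d(\D\bX^i)_p\bigr]+O(\eps)$. This discards $\tfrac1\eps\,\omega_p^*\bigl(A_p(X_p(\tau))-A_p^*\bigr)X_p^i$, which is $O(1)$, not $O(\eps)$: by Lemma \ref{lemmaqneutre} the aggregated variables deviate from $(\bS^*,\bI^*,\bD^*)$ by $\eps M_1+\eps d M_2$, so $A_p(X_p(\tau))=A_p^*+\eps U(\tau)+\eps d V(\tau)+o(\eps)$, and the projected equation carries the extra leading-order term $\omega_p^*\bigl(U(\tau)+dV(\tau)\bigr)X_p^*\,u_p^i$. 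This term is essential: its $U$-part, combined with $\mathcal{F}_p^i$ and the constraint $\sum_i z_p^i=1$, is what produces the quadratic normalisation $-\bz_p^T\Lambda_p\bz_p$ of the replicator equation (without it you get $z_p^if_p^i$ alone, which does not preserve the simplex); its $dV$-part produces the advection correction $\nu_p(\tau)z_p^i$ with $\nu_p=-\sum_k d_{pk}(\omega_p^*\cdot X_k^*)$.

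Relatedly, your claim that $d\sum_k d_{pk}(\omega_p^*\cdot X_k^*)z_k^i$ equals $d(\mathcal{M}\bz^i)_p$ ``after separating the diagonal $k=p$ term via $\D\un=\zero$'' fails for heterogeneous patches: the diagonal contribution of that raw sum is $d_{pp}(\omega_p^*\cdot X_p^*)=-\sum_{k\neq p}d_{pk}$, whereas $m_{pp}=-\sum_{k\neq p}d_{pk}(\omega_p^*\cdot X_k^*)$; the discrepancy is $\sum_{k\neq p}d_{pk}\,\omega_p^*\cdot(X_k^*-X_p^*)\,z_p^i$, which is precisely the $\nu_p z_p^i$ term you dropped. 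A quick sanity check exposes the issue: summing your migration term over $i$ and using $\sum_i z_k^i=1$ gives $d\sum_k d_{pk}(\omega_p^*\cdot X_k^*)\neq 0$ in general, so your reduced flow would leave $\Sigma^P$. The repair is the paper's route: retain the $\eps U+\eps dV$ perturbation of $A_p$ in the projected equation, and determine $\omega_p^*UX_p^*$ and $\nu_p=\omega_p^*VX_p^*$ from the invariance of $\sum_i z_p^i=1$ (set $d=0$ and sum over $i$ for the first, then sum over $i$ with $d>0$ for the second); only then does the migration operator assemble into the stated Metzler matrix $\mathcal{M}$.
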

\begin{proof}
    As in the section \ref{sec:3.2}, for any $p\in\lrb$ and $i\in\lrbN$, define $D_i^p=\frac12\sum_{j=1}^N (D^{ij}_p+D_p^{ji})$ and $X^i_p=(I^i_p,D_p^i)^T$. Denote also $\bX^i=(\bI^i,\bD^i)^T\in [0,1]^{2P}$ and $X_p^\eps(\tau)=\sum\limits_{i=1}^N X_p^i\left(\dfrac{\tau}{\eps}\right)=(I_p^\eps,D_p^\eps)^T$.

\begin{equation}\label{eq:slowepsX} \eps \dtau	 X_p^i =A_p(X_p^\eps(\tau)) X_p^i+\eps \mathcal{F}_p^i(X_p^1,\cdots,X_p^N) +\eps d\left(\begin{pmatrix} \mathcal{D}&0\\0&\mathcal{D}\end{pmatrix} \bX^i\right)_p+o(\eps)\end{equation}
where  $\mathcal{F}_p^i(X_p^1,\cdots,X_p^N) \in[0,1]^2$ come from the first order expansion in $\eps$ due to the quasi-neutrality assumption \ref{qneutral}.
 
From \eqref{defAP} and lemma \ref{lemmaqneutre}, we can write 
$$A_p(X_p(\tau))=\underbrace{A_p^*}_{Neutrality}+\underbrace{\eps U(\tau)}_{\text{perturbation from strains variability}}+\underbrace{\eps dV(\tau)}_{\text{perturbation from hosts migration}}+o(\eps)$$
where $U(\tau)$ results from the perturbations on $X_p$ independant of $d$ and $\eps d V(\tau)$ comes from  the perturbations on $X_p$ due to the migration terms.
 This yields

$$\eps\dtau X_p^i =\left(A_p^*+\eps U(\tau)+\eps dV(\tau)\right) X_p^i+\eps \mathcal{F}_p^i(X_p^1,\cdots,X_p^N) +\eps d\left(\begin{pmatrix} \mathcal{D}&0\\0&\mathcal{D}\end{pmatrix} \bX^i\right)_p+o(\eps)$$

Let  $\omega_p^*=(\phi_p^*,\psi_p^*)$ defined in \eqref{explicit-omega} and $u_{p}^i=\omega_p^* X_p^i$. We have $X_p^i(\tau)=u_{p}^i(\tau) X_p^*+\xi_p^i(\tau)$ with $\omega_p^*\xi_p^i=0$. 
This yields\footnote{Recall that $0$ is the principal eigenvalue of $A_p^*$, the other one being $-\alpha_p<0$ for some $\alpha_p>0$.  By construction $\xi_p^i$ belongs to the eigen-space corresponding to $-\alpha_p$.  It follows that $\dt \xi_p^i=A_p^* \xi_p^i+ O(\eps)$ reads     $\dt \xi_p^i=-\alpha_p \xi_p^i + O(\eps)$ .} the slow-fast system

% $\mathcal{X}_p^i =O(\eps)$ and then with obvious notations

\begin{equation}\label{eq:slow-fast}\begin{cases}
	\eps\dtau \xi_p^i=-\alpha_p \xi_p^i + O(\eps)\\
\dtau u_{p}^i=\left[\omega^* \left(U(\tau)+dV(\tau)\right)  X_p^*\right]u_{p}^i+ d \left(\phi_p^* \left(\mathcal{D} (\bI^* \bu^i)\right)_p+\psi_p^* \left(\mathcal{D} (\bD^*\bu^i)\right)_p\right) +\mathcal{F}_p^i(u_{p}^1X_p^*+\xi_p^1,\cdots,u_{p}^NX_p^*+\xi_p^N) \\
\phantom{\dtau u_{p}^i=+}
+\omega^* \left(U(\tau)+dV(\tau)\right) \xi_p^i+
d \left(\phi_p^* \left(\mathcal{D} (\bI^* \mathbf{\xi}^i)\right)_p+\psi_p^* \left(\mathcal{D} (\bD^*\mathbf{\xi}^i)\right)_p\right)+O(\eps).
\end{cases}
\end{equation}

Now we apply the Tikhonov slow-fast method \cite{Tikhonov1952}. Remark that comming back to $t=\dfrac{\tau}{\eps}$ and taking $\eps\to$ we obtain the fast dynamics \eqref{fastdyn} described in the proof of the Theorem \ref{thNeutral}.

Hence, let $(\mathbf{\xi}_p^{i,\eps}(\tau),\bu_p^{i,\eps}(\tau))$ be a solution of \eqref{eq:slow-fast}. For $\tau>0$, we got $(\mathbf{\xi}_p^{i,\eps}(\tau),\bu_p^{i,\eps}(\tau))\to (0,z_p^i(\tau))$ as $\eps \to 0$.
Define also
 $\bz^{i}=(z^{i}_p)_{p\in\lrb}\in\R^P$ 
 and $\bz_{p}=(z_p^i)_{i\in\lrbN}\in\R^N$.
 
We have $z_p^i(\tau) $ satisfy for all $\tau >0$ :
$$\sum_{i=1}^N z_p^i (\tau)=\lim_{\eps\to 0}\sum_{i=1}^N u_p^{i,\eps} (\tau)=
\omega_p^*\lim_{\eps\to 0} \sum_{i=1}^N X_p^{i,\eps}(\tau) =
\omega_p^*\lim_{\eps\to 0} X_p^{\eps}(\tau) =\left(\omega^*\cdot X_p^*\right)=1
$$

In other words,  for any $p\in\lrb$, we have $\bz_p\in \Sigma=\{\bz\in[0,1]^N,\; \sum_{i=1}^Nz^i=1\}$.

Moreover, taking $\eps\to 0$ in \eqref{eq:slow-fast} and $\xi_p^i\to0$, we obtain the slow equation  on $(\bz_1,\cdots,\bz_{P})\in\Sigma^P$:
\begin{equation}\label{zslow_start}\dtau z_{p}^i=z_{p}^i\left(\omega_p^*U(\tau) X_p^* +  f_p^i(\bz_p)\right)+ d \left(
\phi_p^*\left( \mathcal{D} (\bI^* \bz^{i})\right)_p+\psi_p^* \left(\mathcal{D} (\bD^*\bz^{i})\right)_p+\nu_p(\tau) z_{p}^i\right)
\end{equation}
 wherein we have set $\nu_p(\tau)=\omega_p^* V(\tau)X_p^*$ and\footnote{A direct computation shows that $z_p^i$ is in factor of this expression.} $z_p^if_p^i(z)=\mathcal{F}_p^i(z_{p}^1X_p^*,\cdots,z_{p}^NX_p^*)$.
%wherein we have use the computational fact 
%$\mathcal{G}_p^i (z_p^1,\cdots,z_p^N)=z_p^i f_p^i(z_{p}^1,\cdots,z_{p}^N)$ for some map $f_p^i$

The first parenthesis is independent on $d$. Since $\bz_p(\tau)\in\Sigma$ for any $p$ and $\tau>0$ and $d\geq 0$, taking $d=0$ and summing over $i$ yields 
\begin{equation}
0=\omega_p^*U(\tau) X_p^*+\bar{f}_p(\bz_p)
\end{equation}
where we have denote $\bar{f}_p(\bz_p)=\sum\limits_{i=1}^N z_p^i f_p^i(\bz_p)$. It follows that  
$z_{p}^i\left(\omega_p^*U(\tau) X_p^* +  f_p^i(\bz_p)\right)=z_p^i(f_p^i(\bz_p)-\bar{f}_p(\bz_p))$
Which is the term appearing in a replicator equation.

For the explicit computation it suffice to compute explicitly the functions $f_p^i$ which come from the explicit first order expansion $\mathcal{F}_p^i$ in \eqref{eq:slowepsX}. The explicit computation is detailed in  \cite{le2023quasi} wherein   the system without patches is computed. It appears that the function $f_p^i$ are linear and by denoting $\Lambda_p=(\lambda_p^{ij})_{i,j\in\lrbN^2}$ the pairwise fitnesses matrix  defined in the above-cited paper (see the appendix \ref{Appendix:Lambdap} for the full formula), the first parenthesis in the right term of \eqref{zslow_start} reads 

$$z_{p}^i\left(\omega_p^*U(\tau) X_p^* +  f_p^i(\bz_p)\right)=\Theta_p z_p^i\left((\Lambda_p \bz_p)_i-\bz_p \Lambda_p \bz_p\right)$$
%$$\dtau z_{i,p}=z_{i,p}\left((\Lambda_p \bz_{\cdot,p})_i-\bz_{\cdot,p} \Lambda_p \bz_{\cdot,p}\right)+  d \left(\phi_p^*\left( \mathcal{D} (I_\cdot^* \bz_{i,\cdot})\right)_p+\psi_p^* \left(\mathcal{D} (D_\cdot^*\bz_{i,\cdot})\right))_p+\omega^* V(\tau)X_p^* z_{i,p}\right)$$
%In particular the non explicit terms  comes from the migration perturbation on the aggregated variable $\bX$.

For the second parenthesis with $d$ in factor, remarks first that we have explicitly (denoting $\big(\;\cdot\;)$  the Euclidean inner product in $\R^2$)
$$\phi_p^*\left( \mathcal{D} (\bI^* \bz^{i})\right)_p+\psi_p^* \left(\mathcal{D} (\bD^*\bz^{i})\right)_p=
\sum_{k=1}^P d_{pk} (\phi_p^* I_k^*+\psi_p^* D_k^*) z_{k}^i=\sum_{k=1}^P d_{pk} \left(\omega_p^*\cdot X_k^*\right) z_{k}^i.$$

\noindent Finally, since the Cartesian product  $\Sigma^P=\Sigma\times\cdots\times\Sigma$ is invariant under the equation \eqref{zslow_start}, summing over $i$ for each $p$ yields

$$\nu_p(\tau)=-\sum\limits_{k=1}^N  d_{pk} \left(\omega_p^*\cdot X_k^*\right).
$$
This gives the final slow equation defined on $\Sigma^P=\Sigma\times \cdots\times \Sigma$:

\begin{equation}\label{finalequation}
	\begin{cases}\dtau z_p^i=\Theta_p z_p^i\left((\Lambda_p \bz_p)_i-\bz_p\Lambda_p \bz_p\right)+ d \sum_{k=1}^P d_{pk} \left(\omega_p^*\cdot X_k^*\right)(z_k^i-z_p^i)\\
    \forall p\in\lrb,\; \sum\limits_{i=1}^N z_p^i=1
    \end{cases}
\end{equation}	

\noindent It is possible to write this equation in a more compact form. Define the  matrix $\mathcal{M}=(m_{pk})_{(p,k)\in\lrb^2}$ by $m_{pk}=\left(d_{pk} \left(\omega_p^*\cdot X_k^*\right)\right)$ for $p\neq k$ and  
$m_{pp}=-\sum_{k\neq p} m_ {pk}$. For $k\neq p$, we have $d_{pk}\geq 0$ (assumption \ref{MatrixD}-(i)))  and $\omega^*_p$ and $X_k^*$ are both positive (they are the positive principal eigenvector of a Metzler matrix, see the proof of theorem \ref{thNeutral}). Then   $\mathcal{M}$ is Metzler  and inherits the irreducibility of $\mathcal{D}$. We obtain the shorter form of the reduced equation for $\bz\in\Sigma^P=\Sigma\times\cdots\Sigma$:

\begin{equation}\label{finalequation-matbis}
	\dtau z_{p}^i=\Theta_p z_{p}^i\left((\Lambda_p \bz_p)_i-\bz_p \Lambda_p \bz_p\right)+ d (\mathcal{M}\bz^{i})_p.
\end{equation}

\end{proof}
\subsection{Patch heterogeneity and the matrix $\mathcal{M}$. }
We finish this paper with the following remark which links this result with the continuous spatial structure model described in \cite{le2023spatiotemporal} and the companion paper \cite{maroco2025multipatch}.

Note that if $w_p^*$ and $X_p^*$ are both independent of the patch  $p$ then for any $p,q$, we have $(\omega_p^* \cdot X_q^*)=(\omega_p^* \cdot X_p^*)=1$ and therefore $\mathcal{M}=\mathcal{D}$. This is in particular true if $A_p^*$ does not depend on $p$.
Hence the terms $(\omega_p^* \cdot X_q^*)$ in the migration matrix $\mathcal{M}$ reflect the spatial heterogeneity of the aggregated variables. To highlight this fact we may rewrite 

$$\omega_p^* \cdot X_k^*=\omega_p^*\cdot X_p^*+\omega_p^* \cdot (X_k^*-X_p^*)=1+\omega_p^* \cdot (X_k^*-X_p^*),$$
which, denoting $\nu_{pk}=d_{pk} \left(\omega_p^*\cdot(X_k^*-X_p^*)\right),$ yields:
\begin{equation}\label{finalequation-adv}
	\dtau z_p^i=\Theta_p z_{p}^i\left((\Lambda_p \bz_p)_i-\bz_p \Lambda_p \bz_p\right)+ d (\mathcal{D}\bz^{i})_p+d\sum_{k=1}^P d_{pk} \nu_{pk}(z_k^i-z_p^i).
\end{equation}	

This is exactly the system that we obtain in the companion paper \citep{maroco2025multipatch} if we were to reinterpret the continuous-space replicator found in  \cite{le2023spatiotemporal} by replacing the diffusive operator by the matrix $\mathcal{D}$ and the advection term $\vec{\nu}(x)\cdot \nabla z_i (x)$ by the operator  $\sum_k d_{pk} \left(\omega_p^*\cdot(X_k^*-X_p^*)\right)(z_k^i-z_p^i)$. This means that the two approaches: discretization of the PDE slow-fast reduction, or slow-fast method on the discrete multi-patch ODE system, are equivalent, and produce the same final discrete space replicator equation, that we highlight here.

\section{Conclusion}

%\begin{itemize}
%\item \sout{Commutativity of the methods (pde $\to$ patches or slow fast direct)}
%\item Conceptual Figure : P patches / SIDS  (Px(N²+N) ) to P patch Replicators (Px(N-1) to show a method to study the effects of the paches)
%\item \sout{Link with metapopulation LVand other replicator in space}
%\item \sout{Size of patchs (see appendix)}
%\item \sout{what about $R_0$ smallest than 1 in some patchs ? Strctly speaking the actual method fails.}
%\item \sout{speak of the limitation due to the model in density}
%\item Mention the potential complex behavior of (17) (fore instance link two patches with periodique attractors ect... ) . eq 17 needs to be studied. In the companion paper we start this exploration. 
%\end{itemize}

In this paper, we derive a spatial Replicator equation from a multi-strain SIS model with coinfection and spatial structuration. This derivation relies on two asymptotic assumptions: quasi-neutrality (the strains are similar but not identical) and slow migration.
We propose a new approach by rewriting the system in a form involving a Metzler matrix, whose powerful properties are key to the reduction. This approach is very general and can be extended to various coinfection structures.
The resulting Replicator equation is new in the literature, and its study may help in understanding the impact of space on multi-strain interactions.

{\it Commutativity of the methods.}
A preliminary observation is that the same discrete system emerges when applying a spatial discretization to the spatial replicator model introduced in \cite{le2023spatiotemporal}, which itself is obtained via a slow-fast reduction from a continuous reaction-diffusion SIS framework. This reveals a form of commutativity between the two procedures: whether one first performs the reduction and then discretizes, or discretizes before applying the reduction, the resulting system remains unchanged.

{\it On the replicator equation in space.}
It is worth noting that spatial replicator systems are relatively uncommon in the literature. Two main reasons may account for this. 
First, unlike in the aspatial case, when starting from a Generalized Lotka–Volterra system with migration expressed in terms of abundances $\mathbf{n}^i$, the total population size $\mathbf{N} = \sum \mathbf{n}^i$ typically varies across space. Consequently, rewriting the system in terms of densities $\bz^i = \mathbf{n^i} / \mathbf{N}$ does not straightforwardly yield a replicator equation with migration. This difficulty led early spatial replicator models to combine variables of abundance $\mathbf{n}^i$ and frequency $\bz^i$ (see \cite{Vicker89,Vicker91}). This assumption can be relaxed by introducing an additional equation for the total biomass, as proposed in \cite{DurretLevin94} and generalized in \cite{Griffin21,Griffin23}. 

A second line of reasoning, developed by \cite{Bratus2014}, highlights a conceptual difficulty from a game-theoretic perspective: it is hard to justify the locality of the quadratic term $\bz^T \Lambda \bz$ in the replicator equation, since payoffs are defined relative to all individuals. To address this, they proposed and studied spatial replicator models in which the total payoff is integrated over space, with migration occurring either within or outside the payoff structure. 

In contrast, motivated by SIS models with co-colonization \citep{madec2020predicting,le2023quasi,le2023spatiotemporal}, we directly obtain a replicator system with a local payoff and simple migration acting on the frequencies. Interestingly, the spatial heterogeneity of global quantities imposes a modification of the migration matrix via the coefficients $\nu_{kp}$, but without altering the structure of the equation itself.

{\it Method limitations.}
In this paper, we assume super-critical basic reproduction number in each patch, i.e. $R_{0,p}:=\frac{\beta_p}{r_p+\gamma_p} > 1$, $\forall p$; that is, the disease persists locally everywhere, even in the absence of migration between patches. Clearly, if the disease persists in one patch but not in another, exchanges between patches may lead either to extinction or persistence of the disease. It is therefore natural to ask what the quasi-neutral dynamics are in such cases. Strictly speaking, the current method fails here, as it relies explicitly on the exponential convergence to the endemic equilibrium within each patch in the absence of diffusion. In particular, the very definition of $z_p^i$ breaks down when $R_{0,p} < 1$. Both the statement and the proof of Theorem \ref{th:main} must be significantly adapted in this context.

Similarly, our approach relies heavily on the fact that the epidemiological system is formulated in terms of densities. This requires that the diffusion coefficient remains the same for all the species together with the assumption $\mathcal{D} \un = \zero$  on the connectivity matrix $\mathcal{D}$. As shown in the appendix, this assumption is not general, which implies that it is not always possible to rewrite the system in terms of host densities. A more general framework would be to work directly with a system expressed in terms of abundances (numbers of hosts). It would be interesting to investigate the nature of the slow dynamics in this setting.

{\it Qualitative behavior.} This new spatial replicator equation may help elucidate the effect of discrete space and environmental heterogeneity on the dynamics of multi-strain epidemiological systems, or multispecies co-colonization systems. The final equation consists merely of a coupling of several replicator systems through the connectivity matrix $\mathcal{M}$. While the resulting dynamics are, of course, potentially very complex, the structural simplicity of the replicator equation allows for analytical treatment in certain cases. In particular, when only two species are involved, the system becomes cooperative and is essentially characterized by its steady states. This specific case is investigated in the companion paper \citep{maroco2025multipatch}.

\bibliographystyle{abbrv}
\bibliography{article_refs}

\appendix
\section{Two classical lemmas}
This lemma is a classical result of real analysis.
\begin{lemma}\label{lemma:appendixA}
	Let $A,B\in\R$ such that $A<B$. Suppose that  $x\in C^1(\R,]A,B[)$ satisfies $|x'(t)|\leq M e^{-\alpha t}$ for all $t\in\R $ and some $M,\alpha>0$.\\
	
	Then there exist $z\in[A,B]$ such that $ |x(t)-z|\leq \dfrac{M}{\alpha} e^{-\alpha t}$.
\end{lemma}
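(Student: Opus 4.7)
The plan is to apply the Cauchy criterion to show that $x$ has a limit at $+\infty$, then obtain the explicit decay rate by passing to the limit inside the integral bound. The whole argument is one line of integration plus one limit, so there is no real obstacle; the only care needed is to ensure $z$ lies in the closed interval $[A,B]$ (not just the open one) and that the exponential bound is sharp.

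First, I would fix $t\in\R$ and $s\geq t$ and estimate the increment
\begin{equation*}
|x(s)-x(t)|=\left|\int_t^s x'(u)\,du\right|\leq \int_t^s Me^{-\alpha u}\,du = \frac{M}{\alpha}\bigl(e^{-\alpha t}-e^{-\alpha s}\bigr)\leq \frac{M}{\alpha}e^{-\alpha t}.
\end{equation*}
Since the right-hand side tends to $0$ as $t\to+\infty$, the function $x$ satisfies the Cauchy criterion at $+\infty$, hence the limit $z:=\lim_{t\to+\infty}x(t)$ exists in $\R$. Because $x(u)\in\,]A,B[$ for every $u$, the limit belongs to the closure $[A,B]$.

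Finally, letting $s\to+\infty$ in the displayed inequality and using the continuity of the absolute value yields
\begin{equation*}
|x(t)-z|=\lim_{s\to+\infty}|x(t)-x(s)|\leq \frac{M}{\alpha}e^{-\alpha t},
\end{equation*}
which is the desired bound. This concludes the proof, with no step expected to cause difficulty.
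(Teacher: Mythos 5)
Your proof is correct and follows essentially the same route as the paper: bound the increment $|x(s)-x(t)|$ by integrating $|x'|$, invoke the Cauchy criterion to obtain the limit $z\in[A,B]$, and pass to the limit in the increment bound to get the explicit rate $\frac{M}{\alpha}e^{-\alpha t}$. The only cosmetic difference is that the paper writes the final estimate directly as $|x(t)-z|=\bigl|\int_t^{+\infty}x'(s)\,ds\bigr|$, whereas you let $s\to+\infty$ in the two-point inequality; these are the same computation.
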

\begin{proof}
Let $t_1,t_2\in\R$. By the assumption on the derivative it comes $|x(t_2)-x(t_1)|\leq M\left|\int_{t_1}^{t_2} e^{-\alpha s}ds\right|=\dfrac{M}{\alpha}\left|e^{-\alpha t_1}-e^{-\alpha t_2}\right|$. So for any $\eps>0$ there exist $T>0$ such that for any $t_1,t_2>T$, $|x(t_2)-x(t_1)|\leq \eps$. Hence $x$ is a uniform Cauchy function defined on the compact $[A,B]$ and there exists $z\in [A,B]$ such that $\lim_{t\to+\infty} x(t)\to z$.

Moreover we have
$$|x(t)-z|=\left|\int_t^{+\infty} x'(s)ds\right|\leq \dfrac{M}{\alpha}{e^{-\alpha t}}.$$

\end{proof}

\begin{lemma}\label{lemma:proj}
 Let $A$ be a $n\times n$ matrix. Assume that $E$ is a linear subspace of $\mathbb{R}^n$ invariant under $A$ and that there exist  $\alpha_1 >0$ such that $$\forall t\geq 0,\; \forall X\in E, \; \|e^{t A} X\|\leq  e^{-\alpha_1 t}\|X\|.$$

 Let $G: \R\to E$ be a map satisfying $\|G(t)\|\leq M_2 e^{-\alpha_2 t}$
 for any $t\geq 0$ and some positive numbers $M,\alpha_2$.

 Consider the equation 
 $$\begin{cases}
     \dt \xi = A \xi + G(t),\\
     \xi(0)=\xi_0\in E.
 \end{cases}$$
 Then $\xi(t)\in E$ for all $t\geq 0$ and there exists $M>0$ and $\alpha>0$ such that

 $$\forall t\geq 0,\; \|\xi(t)\|\leq M e^{-\alpha t}. $$
\end{lemma}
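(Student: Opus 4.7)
\textbf{Proof plan for Lemma \ref{lemma:proj}.} The natural tool is Duhamel's (variation of constants) formula, which gives the explicit representation
\begin{equation*}
\xi(t) = e^{tA}\xi_0 + \int_0^t e^{(t-s)A}\,G(s)\,ds.
\end{equation*}
This reduces the whole statement to reading off two facts from this formula.

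First, the invariance of $E$ under $\xi(t)$. Since $E$ is invariant under $A$, it is also invariant under the semigroup $e^{tA}$ (for instance by writing $e^{tA}$ as a convergent power series in $A$, each term of which preserves $E$). Hence $e^{tA}\xi_0 \in E$ for every $t\geq 0$, and for every $s\in[0,t]$ the vector $G(s)\in E$ is sent by $e^{(t-s)A}$ into $E$. The integral is a limit of Riemann sums of elements of $E$, and $E$ is closed, so $\xi(t)\in E$.

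Second, the exponential estimate. The hypothesis gives $\|e^{tA}\xi_0\|\leq e^{-\alpha_1 t}\|\xi_0\|$ and $\|e^{(t-s)A}G(s)\|\leq e^{-\alpha_1(t-s)}\|G(s)\|\leq M_2 e^{-\alpha_1(t-s)}e^{-\alpha_2 s}$. Plugging these into Duhamel's formula and bounding the integral yields
\begin{equation*}
\|\xi(t)\|\leq e^{-\alpha_1 t}\|\xi_0\| + M_2 \int_0^t e^{-\alpha_1(t-s)}e^{-\alpha_2 s}\,ds.
\end{equation*}
The remaining elementary calculation is the only mildly delicate step, since the estimate for the convolution depends on whether $\alpha_1=\alpha_2$. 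If $\alpha_1\neq\alpha_2$, the integral equals $(e^{-\alpha_2 t}-e^{-\alpha_1 t})/(\alpha_1-\alpha_2)$, bounded by $e^{-\min(\alpha_1,\alpha_2)t}/|\alpha_1-\alpha_2|$. If $\alpha_1=\alpha_2$, it equals $te^{-\alpha_1 t}$, which for any $\alpha\in(0,\alpha_1)$ is bounded by $C_\alpha e^{-\alpha t}$ with $C_\alpha=\sup_{t\geq 0}te^{-(\alpha_1-\alpha)t}<\infty$.

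Combining the two terms, we may therefore take any $\alpha\in(0,\min(\alpha_1,\alpha_2))$ (with $\alpha=\min(\alpha_1,\alpha_2)$ in the non-resonant case $\alpha_1\neq\alpha_2$) and absorb everything into a single constant $M=M(\|\xi_0\|,M_2,\alpha_1,\alpha_2,\alpha)$, giving $\|\xi(t)\|\leq Me^{-\alpha t}$ for all $t\geq 0$. There is no genuine obstacle here; the only point to handle carefully is the borderline case $\alpha_1=\alpha_2$, which is why one loses an arbitrarily small amount of decay rate in general.
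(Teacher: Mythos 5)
Your proof is correct and follows essentially the same route as the paper: the variation-of-constants (Duhamel) formula combined with the exponential bound on $e^{tA}|_E$ and on $G$. You are in fact somewhat more careful than the paper's own two-line argument, since you spell out why $\xi(t)$ stays in $E$ and you treat the borderline case $\alpha_1=\alpha_2$ explicitly, which the paper leaves implicit in ``the conclusion follows.''
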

\begin{proof}We write 
  $$\xi(t)=e^{t A} \xi_0+\int_0^te^{(t-s) A }  G(t) ds. $$
  So $$\|\xi(t)\|\leq e^{-\alpha_1}\|\xi_0\|+Me^{-\alpha_1 t}\int_0^t e^{(\alpha_1-\alpha_2)s} ds$$
  and the conclusion follows.
\end{proof}

\section{Formula of the Pairwise invasion Fitness matrix between strains in each patch $\Lambda_p$}\label{Appendix:Lambdap}
This formula is taken from \cite{le2023quasi}. We reproduce it here for the sake of completeness.
We use  the notation of lemma \ref{lemmaneutre} table \ref{tabledef}.
 
The average speed of the dynamics in the patch $p$ is given by
$\Theta_p=\sum\limits_{s=1}^5 \Theta_{p,s}$
where    
$$\Theta_{p,1} = \frac{2(r_p+\gamma_p)\big(T_p^*\big)^2}{\mathcal{P}_p}, \quad  
\Theta_{p,2} = \frac{\gamma_p I_p^*(I_p^* + T_p^*)}{\mathcal{P}_p}, \quad
\Theta_{p,3} =\frac{\gamma_p T_p^* D_p^*}{{\mathcal{P}_p}} ,
\Theta_{p,4} = \frac{2(r_p+\gamma_p) T_p^* D_p^*}{{\mathcal{P}_p}}, \quad 
\Theta_{p,5} = \frac{\beta_pI_p^* T_p^*}{{\mathcal{P}_p}}$$
with $${\mathcal{P}_p}=2\big(T_p^*\big)^2-I_p^*D_p^*.$$

And the coefficients of the matrix appearing in the Replicator equation $\Lambda_p=(\lambda_p^{ij})_{(i,j)\in\lrbN^2}$ are given by 
\begin{equation}\label{def:lambda}\lambda_p^{ij} = \theta_{p,1}(b_p^i - b_p^j) + 
	\theta_{p,2}(-\nu_p^i + \nu_p^j) + 
	\theta_{p,3}(-c_p^{ij} - c_p^{ji} + 2c_p^{jj}) + 
	\theta_{p,4}(w_p^{ij} - w_p^{ji}) + 
	\theta_{p,5}(I_p^*(\alpha_p^{ji} - \alpha_p^{ij}) + {D_p^*}(\alpha_p^{ji} - \alpha_p^{jj}))\end{equation}
{where the coefficients setting the `weights' of each trait dimension are: $\theta_{p,s}=\dfrac{\Theta_{p,s}}{\Theta_p}$ for $s=1,\cdots,5$.}

\section{Remark on the connectivity matrix $\mathcal{D}$}
\subsection{Conservation of the total mass}
Initially we assumed that $\mathcal{D}\un=0$. This assumption is consistent with a model in density. If we want to write model in term of abundance the correct assumption is 

\begin{equation}\label{conservation-M}\un^T \mathcal{D}=0.
\end{equation}

In particular, if we model $P$ tanks of volume $V_1(t),\cdots, V_p(t)$ at the time $t$ and set $\bV(t)=(V_1(t),\cdots,V_p(t))^T$ then we may write 

$$\dt \bV(t)=\mathcal{D} \bV(t)$$

Then the assumption \eqref{conservation-M} states simply that the total volume $\un^T \bV(t)$ is constant: their is no addition or removing of volume from the system.
But the volume in each patch may change. In particular, $\bV$ is not necessarily a steady state of the system

\begin{equation}\label{AppendixC-dyn}\dt \bS=\br (\bV-\bS)+\diffrate\mathcal{D}\bS
	\end{equation}

To insure that $V_p$ is constant for each $p$ me must add the additional structure on $\mathcal{D}$:
\begin{equation}\label{conservation-V}
\mathcal{D} \bV=0
\end{equation}

With such a structure,  $\bV$ is then a stable steady state of the system 

\eqref{AppendixC-dyn}

\subsection{Renormalisation}
Now assume that $\mathcal{D}$  satisfies \eqref{conservation-M} and \eqref{conservation-V}. We may renormalised each patch by its volume by the change of variables

$$x_i= y_i V_i$$

Hence we define $P_{\bV}=diag(\bV)$ and
$$\hat{\mathcal{D}} =P_{\bV}^{-1} \mathcal{D} P_{\bV}$$

Then $\hat{\mathcal{D}}^T$ satisfies \eqref{conservation-M} and \eqref{conservation-V}. 

Applying this change of unit on \eqref{AppendixC-dyn}
by denoting $S=P_{\bV}\hat{\bS}$ we get

$$\dt \hat{\bS}=\br (\un-\hat{\bS})+\diffrate\hat{\mathcal{D}}\hat{\bS}$$
This is exactly the type of matrix we are using in order to insure the conservation of the density in each patch.

In conclusion, we may always write this system if the migration keeps invariant the mass of each patch. Otherwise, we need to work directly on  \eqref{AppendixC-dyn}.

\subsection{Construction of $\mathcal{D}$ for fixed volumes}

Indeed, for a given irreducible Metzler Matrix satisfying \eqref{conservation-M}, $0$ is the principal eigenvalue and then there exists only one such $\bV$, up to a multiplicative constant.

Conversely, we can ask what type of matrix keep fixed a given family of volumes $\{V_1,\cdots,V_p\}$. Next, we show that we  may always explicitly construct such a (family of) matrix.
If $\bV$ is given, then we can construct a Metzler matrix satisfying \eqref{conservation-M}. 

It is easy to see that, for $P$ patches, the space of the matrices (not necessary Metzler) solution of the problem is linear and of dimension $P^2-2P+1=(P-1)^2$.

If $P=2$ then there is no choice. Denoting $\bV=(V_1,V_2)$ we get a space of dimension 1 and the solution reads

$$M_{12}=x \begin{pmatrix} -V_2&V_1\\V_2&-V_1\end{pmatrix}, \;x\in\R.$$ 
For any $x>0$ we obtain a Metzler matrix solution of the problem.

In general, denoting $\bV=(V_1,\cdots,V_p)$ and for $i<j$, the matrix $M_{ij}\in\R^{P\times P}$  defined by 

$$M_{ij}(i,i)=-V_j,\quad M_{ij}(i,j)=V_i,\quad M_{ij} (j,i)=V_j,\quad M_{ij} (j,j)=-V_i$$
and $0$ elsewhere.

Then for any non-negative numbers $x_{ij}$ the matrix

$$M=\sum_{1\leq i<j\leq P} x_{ij} M_{ij}  
$$ satisfies \eqref{conservation-M} and \eqref{conservation-V}.
Note that this gives only $P(P-1)/2$ liberty degree on the $(P-1)^2$, so other matrices may be  solution as well.

\end{document}